\documentclass[a4paper,fleqn]{cas-sc}

\usepackage[authoryear,longnamesfirst]{natbib}

\def\tsc#1{\csdef{#1}{\textsc{\lowercase{#1}}\xspace}}
\tsc{WGM}
\tsc{QE}
\tsc{EP}
\tsc{PMS}
\tsc{BEC}
\tsc{DE}

\newtheorem{theorem}{Theorem}
 \newtheorem{corollary}[theorem]{Corollary}

\newdefinition{rmk}{Remark}
\newproof{proof}{Proof}

\newcommand{\R}{{\mathbb R}}

\newcommand{\PP}{{\mathbb P}}

\newcommand{\bx}{\mbox{\boldmath{$x$}}}
\newcommand{\bb}{\mbox{\boldmath{$b$}}}

\newcommand{\br}{\mbox{\boldmath{$r$}}}
\newcommand{\bu}{\mbox{\boldmath{$u$}}}
\newcommand{\bv}{\mbox{\boldmath{$v$}}}
\newcommand{\be}{\mbox{\boldmath{$e$}}}
\newcommand{\by}{\mbox{\boldmath{$y$}}}
\newcommand{\sbx}{\mbox{\boldmath{${\scriptstyle x}$}}}
\newcommand{\sby}{\mbox{\boldmath{${\scriptstyle y}$}}}

\newcommand{\bzero}{\mbox{\boldmath{$0$}}}

\begin{document}
\let\WriteBookmarks\relax
\def\floatpagepagefraction{1}
\def\textpagefraction{.001}
\shorttitle{}
\shortauthors{D. Furch\`{i} and L. Reichel}

\title [mode = title]{Iterated Tikhonov regularization of large linear problems}        

\author[1]{D. Furch\`{i}}[orcid=0009-0005-1477-5194]
\ead{d.furchi@uninsubria.it}

\affiliation[1]{organization={Dipartimento di Scienza e Alta Tecnologia, Universit\`a dell'Insubria},
                city={Como},
                postcode={22100},
                country={Italy}}

\author[2]{L. Reichel}[orcid=0000-0003-1729-6816]
\ead{reichel@math.kent.edu}

\affiliation[2]{organization={Department of Mathematical Sciences, Kent State University},
                city={Kent},
                postcode={44242},
                state={Ohio},
                country={USA}}

\begin{abstract}
Many solution methods for linear discrete ill-posed problems with error-contaminated data
(right-hand side) apply Tikhonov regularization to compute a meaningful approximate solution. This solution depends on a 
regularization parameter. It is well known that iterated Tikhonov regularization often 
determines an approximate solution of higher quality than (standard) Tikhonov 
regularization. We consider the situation when an estimate of the norm of the error in the
data is known and would like to apply iterative Tikhonov regularization to determine an 
approximate solution that satisfies the discrepancy principle. This requires a suitable 
choice of a regularization parameter. The standard approach to determine this parameter is
to compute solutions for several values of the regularization parameter and choose a 
computed approximate solution that satisfies the discrepancy principle. This paper 
discusses iterated Tikhonov regularization based on partial Golub-Kahan bidiagonalization
and describes how the regularization parameter can be determined without computing several
approximate solutions by using the connection between Golub-Kahan bidiagonalization and 
Gauss quadrature. This approach reduces the computational effort required to compute a
desired solution.
\end{abstract}

\begin{keywords}
Linear discrete ill-posed problem regularization \sep Golub-Kahan bidiagonalization \sep
Gauss quadrature \sep discrepancy principle
\end{keywords}

\maketitle


\section{Introduction}\label{sec1}
Many problems in Science and Engineering require the determination of the unknown input
from known, but generally corrupted, output. Such problems are referred to as inverse 
problems. We are concerned with linear inverse problems that are modeled by a 
discretization of a Fredholm integral equation of the first kind with a smooth kernel. The
discretized problem can be written as a linear system of equations
\begin{equation}\label{linsys}
    A{\bx}={\bb}, \qquad A \in {\R}^{m \times n}, \qquad {\bx} \in {\R}^n,\qquad {\bb} \in {\R}^m,
\end{equation}
with a matrix $A$ whose singular values ``cluster'' at the origin, i.e., $A$ has many 
nonvanishing singular values close to the origin; the matrix also may have vanishing
singular values; see, e.g., Chung and Gazzola \cite{CG} and Hansen \cite{Ha2} for a 
discussion on linear systems of equations \eqref{linsys} with a matrix of this kind. A 
nice treatment of inverse problems is provided by Engl et al.  \cite{EHN}. 

The right-hand side vector ${\bb}$ in (\ref{linsys}) represents the available 
error-contaminated output of the linear system. The error, which we denote by 
${\be}\in{\R}^m$ and refer to as ``noise'', may stem from measurement inaccuracies. Due to
the noise in $\bb$, the linear system of equations \eqref{linsys} might not be consistent.

Let $\widehat{\bb}\in{\R}^m$ denote the unknown error-free vector that is associated with the available error-contaminated vector $\bb$, i.e.,
\begin{equation}\label{error}
    {\be}=\widehat{\bb}-{\bb}.
\end{equation}
We assume the linear system of equations 
\begin{equation}\label{linsysexact}
    A{\bx}=\widehat{\bb}
\end{equation}
to be consistent and would like to compute its solution 
$\widehat{\bx}$ of minimal Euclidean 
norm. Thus, our computational task is to determine an approximation of $\widehat{\bx}$ by
computing a suitable approximate solution of the available linear system (\ref{linsys}). 

Straightforward solution of (\ref{linsys}), e.g., solving the least-squares problem
\begin{equation*}
    \min_{\sbx\in\R^n}\|A\bx-\bb\|_2
\end{equation*}
when $m\ge n$, where $\|\cdot\|_2$ denotes the Euclidean norm, generally does not yield a
meaningful approximation of $\widehat{\bx}$ due to the error 
${\be}$ in the data vector ${\bb}$ and the severe ill-conditioning of the matrix $A$. One
therefore often replaces the linear system (\ref{linsys}) with a nearby system, whose 
solution is less sensitive to the error ${\be}$ in ${\bb}$, and uses the solution of the
latter system as an approximation of $\widehat{\bx}$. This replacement is known as 
\emph{regularization}. For ease of discussion, we will assume that a fairly accurate bound
\begin{equation}\label{delta}
    \|\be\|_2\leq\delta
\end{equation} 
is known, but this requirement can be removed; see below.

One of the most popular regularization methods is due to Tikhonov. Tikhonov regularization
replaces (\ref{linsys}) with the least-squares problem 
\begin{equation}\label{tikh}
    \min_{\sbx\in\R^n}\{\|A\bx-\bb\|_2^2+\mu^{-1}\|L\bx\|_2^2\},
\end{equation}
where $\mu>0$ is a regularization parameter and $L\in\R^{s\times n}$ is a regularization
matrix. This matrix often is chosen as the identity matrix or as a finite difference 
matrix with $s\leq n$; see, e.g., \cite{DR,DHNR,Ha2,HNR} for some examples. The value of
$\mu$ determines how sensitive the solution ${\bx}_\mu$ of \eqref{tikh} is to the error 
${\be}$ and how close ${\bx}_\mu$ is to the solution $\widehat{\bx}$ of (\ref{linsysexact}). 
We will below comment on the use of $\mu^{-1}$ in \eqref{tikh} instead of $\mu$. 

Since the error bound \eqref{delta} is known and the system \eqref{linsysexact} is 
consistent, we can determine a suitable value of $\mu>0$ with the discrepancy principle. 
We remark that if no error bound is known, then it may be possible to compute an estimate
of $\delta$ in various ways; see, e.g., \cite{CG,HR1,HR2} for discussions and 
illustrations. Alternatively, one may determine the amount of regularization with other 
techniques such as generalized cross validation; 
see \cite{Ki,KR,RR} for discussions.

It is well-known that \emph{iterated Tikhonov regularization} typically determines more 
accurate approximations of $\widehat{\bx}$ than (standard) Tikhonov regularization 
\eqref{tikh}. Iterated Tikhonov regularization consists in solving a sequence of 
minimization problems
\begin{equation}\label{ittik}
    \bx_{k+1}={\rm arg}\min_{\sbx\in\R^n}\{\|A\bx-\bb\|_2^2+\mu^{-1}\|L(\bx-\bx_k)\|_2^2\},\qquad k=1,2,\ldots,\ell-1,
\end{equation}
where $\bx_1$ is the solution of \eqref{tikh}. The convergence of the solutions 
$\bx_\ell=\bx_{\ell,\mu}$ for $\ell=1,2,\ldots~$, for a suitable value of $\mu$ is 
discussed by Engl et al. \cite{EHN} for the situation when $L=I$, and by Buccini et al.
\cite{BDR} for more general regularization matrices. Computed examples that illustrate the
benefit in accuracy of using iterated Tikhonov regularization \eqref{ittik} instead of 
(standard) Tikhonov regularization \eqref{tikh} can be found in \cite{Bu,BDR,BOR,BPR,HRY}.

The \emph{discrepancy} associated with an approximate solution $\bx_\ell={\bx}_{\ell,\mu}$ 
of \eqref{ittik} is defined as
\begin{equation}\label{rmu}
    {\br}_{\ell,\mu}={\bb}-A{\bx}_{\ell,\mu},\qquad \ell=2,3,\ldots~.
\end{equation}
We also define 
\begin{equation*}
    {\br}_{1,\mu}={\bb}-A{\bx}_{1,\mu},
\end{equation*}
where $\bx_{1,\mu}=\bx_1$ is the solution of the (standard) Tikhonov minimization problem 
\eqref{tikh}. Let $\eta> 1$ be a user-specified parameter. If the noise $\be$ in $\bb$ is 
white Gaussian and the bound $\delta$ in \eqref{delta} is quite accurate, then $\eta$ can 
generally be chosen close to unity. The solutions ${\bx}_{\ell,\mu}$ of (\ref{ittik}) for
$\ell\ge 2$ and ${\bx}_{1,\mu}$ of (\ref{tikh}) are said to satisfy the {\emph discrepancy
principle} if 
\begin{equation}\label{discrprinc}
    \|{\br}_{\ell,\mu}\|_2\approx\eta\delta. 
\end{equation}

The implementations \cite{Bu,BDR,BOR,BPR,HRY} of iterated Tikhonov regularization evaluate
solutions of \eqref{ittik} for a sequence of regularization parameter values $\mu=\mu_k$,
and choose a value such that the solution $\bx_{\ell,\mu}$ satisfies \eqref{discrprinc}. 
For instance, $\mu=\mu_k$ may be determined so that the discrepancy for each solution 
$\bx_k$ of \eqref{ittik}, for $k=1,2,\ldots,\ell$, satisfies \eqref{discrprinc}; see, 
e.g., \cite{HRY}. It is the purpose of the present paper to show how a value of the 
regularization parameter $\mu$ can be determined before the beginning of the iterations 
\eqref{ittik} so that the solution $\bx_{\ell,\mu}=\bx_\ell$ satisfies the discrepancy 
principle. This reduces the computational effort to compute $\bx_\ell$. We achieve this by
first reducing the large-scale problem \eqref{linsys} to a problem of fairly small size 
with the aid of partial Golub-Kahan bidiagonalization of the matrix $A$. Then the 
connection between Golub-Kahan bidiagonalization and Gauss quadrature is used to determine
a suitable value of the regularization parameter.

This paper is organized as follows. Section \ref{sec2} discusses the choice of the
regularization parameter in the iterated Tikhonov method \eqref{ittik} and shows that this
value increases with $\ell$. Partial Golub-Kahan bidiagonalization of the matrix $A$ and 
the connection to Gauss quadrature are reviewed in Section \ref{sec3}. The application of 
partial Golub-Kahan bidiagonalization to speed up the computations required for iterated 
Tikhonov regularization \eqref{ittik} is discussed in Section \ref{sec4}, and Section 
\ref{sec5} contains a few computed examples. Concluding remarks can be found in Section
\ref{sec6}.


\section{Iterated Tikhonov regularization}\label{sec2}

We consider the situation when the regularization matrix $L$ is the identity. An approach
to handling other regularization matrices is commented on below. Let $\bx_{1,\mu}$ solve
the Tikhonov minimization problem \eqref{tikh}. The iterates 
$\bx_{2,\mu},\ldots\bx_{\ell,\mu}$ determined by the iterated Tikhonov method 
\eqref{ittik} satisfy
\begin{equation}\label{ittik2}
    (A^*A+\mu^{-1}I)\bx_{k+1,\mu}=\mu^{-1}\bx_{k,\mu}+A^*\bb,\qquad k=1,2,\ldots,\ell-1.
\end{equation}
It follows that the solution $\bx_{\ell,\mu}$ can be expressed as 
\begin{equation}\label{xmu}
    \bx_{\ell,\mu}=\sum_{j=1}^\ell \frac{1}{\mu^{j-1}}(A^*A+\mu^{-1}I)^{-j}A^*\bb,
\end{equation}
where the superscript $^*$ denotes transposition and $I$ stands for the identity matrix of
suitable order.

\begin{theorem}\label{thm1}
Let $\bx_{\ell,\mu}$ be the iterated Tikhonov approximate solution of \eqref{linsys} 
defined by \eqref{xmu} for some fixed $\mu>0$, and let $\br_{\ell,\mu}$ denote the 
associated residual vector \eqref{rmu}. Then
\begin{equation}\label{rnorm}
    \|\br_{\ell,\mu}\|_2^2=\bb^*(\mu AA^*+I)^{-2\ell}\bb.
\end{equation}
\end{theorem}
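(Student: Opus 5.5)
We first find a closed form for the residual $\br_{\ell,\mu}=\bb-A\bx_{\ell,\mu}$ by pushing $A$ through the sum \eqref{xmu}. The key identity is the push-through relation
\[
A(A^*A+\mu^{-1}I)^{-j}=(AA^*+\mu^{-1}I)^{-j}A,
\]
which follows from $A(A^*A+\mu^{-1}I)=(AA^*+\mu^{-1}I)A$ by inverting and iterating. With $M=AA^*+\mu^{-1}I$ (symmetric positive definite) this gives
\[
A\bx_{\ell,\mu}=\sum_{j=1}^\ell \mu^{-(j-1)}M^{-j}AA^*\bb .
\]
Since $AA^*=M-\mu^{-1}I$, each term becomes $\mu^{-(j-1)}M^{-(j-1)}\bb-\mu^{-j}M^{-j}\bb$.

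Writing $N=\mu^{-1}M^{-1}=(\mu AA^*+I)^{-1}$, the sum is the telescoping series
\[
\sum_{j=1}^\ell (N^{j-1}-N^j)\bb=\bb-N^\ell\bb .
\]
Hence
\[
\br_{\ell,\mu}=(\mu AA^*+I)^{-\ell}\bb .
\]

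Because $(\mu AA^*+I)^{-\ell}$ is symmetric, we get $\|\br_{\ell,\mu}\|_2^2=\bb^*(\mu AA^*+I)^{-2\ell}\bb$, which is \eqref{rnorm}.

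The only real care point is the telescoping step: verifying that $\mu^{-(j-1)}M^{-j}(M-\mu^{-1}I)$ equals $N^{j-1}-N^{j}$. This amounts to $\mu^{-(j-1)}M^{-(j-1)}=N^{j-1}$, which holds because all the matrices involved commute.

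Alternatively, one can argue by induction directly from \eqref{ittik2}. Multiplying \eqref{ittik2} by $A$ and using the push-through identity gives the recursion
\[
\br_{k+1,\mu}=(\mu AA^*+I)^{-1}\br_{k,\mu},
\]
with $\br_{1,\mu}=(\mu AA^*+I)^{-1}\bb$ from the $k=0$ case $\bx_{0,\mu}=\bzero$. This yields the same formula and serves as a cross-check.
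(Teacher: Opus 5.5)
Your proof is correct and follows the paper's route: your push-through relation is the paper's identity \eqref{eq1}, and rewriting $AA^*=M-\mu^{-1}I$ to telescope the sum is what the paper's identity \eqref{eq2} is meant to do. Your explicit telescoping is actually cleaner, because the correct form of that identity is $AA^*(AA^*+\mu^{-1}I)^{-1}=I-(\mu AA^*+I)^{-1}$ (the paper's \eqref{eq2} has a sign slip), and your induction via $\br_{k+1,\mu}=(\mu AA^*+I)^{-1}\br_{k,\mu}$ is a valid independent check.
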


\begin{proof}
Let $\mu>0$. We will use the identities
\begin{eqnarray}
\label{eq1}
    (A^*A+\mu^{-1}I)^{-1}A^*&=&A^*(AA^*+\mu^{-1}I)^{-1},\\
\label{eq2}
    AA^*(AA^*+\mu^{-1}I)^{-1}&=&I+(\mu AA^*+I)^{-1}.
\end{eqnarray}
Substituting \eqref{xmu} into \eqref{rmu} and using \eqref{eq1} gives
\begin{equation*}
    \br_{\ell,\mu}=\bb-\sum_{j=1}^\ell \frac{1}{\mu^{j-1}} AA^*(AA^*+\mu^{-1})^{-j}\bb.
\end{equation*}
Application of \eqref{eq2} now yields
\begin{equation*}
    \br_{\ell,\mu}=(\mu AA^*+I)^{-\ell}\bb
\end{equation*}
and the theorem follows. $\Box$
\end{proof}

The discrepancy principle \eqref{discrprinc} prescribes that for a given $\ell\ge 1$, the
regularization parameter $\mu>0$ be chosen so that
\begin{equation}\label{disc2}
    \bb^*(\mu AA^*+I)^{-2\ell}\bb=\eta^2\delta^2.
\end{equation}

We use the singular value decomposition (SVD)
\begin{equation}\label{svd}
    A=U\Sigma V^*,
\end{equation}
in our analysis below. Here the matrices $U\in\R^{m\times m}$ and $V\in\R^{n\times n}$ are
orthogonal and
\begin{equation*}
    \Sigma={\rm diag}[\sigma_1,\sigma_2,\ldots,\sigma_{\min\{m,n\}}]\in\R^{m\times n}.
\end{equation*}
The singular values $\sigma_j$ are nonnegative and ordered according to 
$\sigma_1\ge\sigma_2\ge\ldots\ge\sigma_{\min\{m,n\}}\ge 0$. We remark that the numerical 
method proposed in this paper does not require the computation of the SVD.

\begin{corollary}\label{cor1}
Using the notation of Theorem \ref{thm1}, let $0<\mu<\infty$ and assume that $A^*\bb\ne\bzero$. Then
\begin{equation*}
    \|\br_{\ell+1,\mu}\|_2<\|\br_{\ell,\mu}\|_2,\qquad \ell=1,2,\ldots~.
\end{equation*}
Further, let $\bb_0$ denote the orthogonal projections of $\bb$ onto the null space of 
$A^*$. Then
\begin{equation}\label{limit}
    \lim_{\ell\rightarrow\infty}\|\br_{\ell,\mu}\|_2=\|\bb_0\|_2.
\end{equation}
In particular, in the common situation when $n=m$ and the matrix $A\in\R^{m\times n}$ is of full 
rank, we have $\lim_{\ell\rightarrow\infty}\|\br_{\ell,\mu}\|_2=0$. Finally, it follows by continuity that
\begin{equation}\label{lim0}
    \lim_{\mu\searrow 0}\|\br_{\ell,\mu}\|_2=\|\bb\|_2,\quad \ell=1,2,\ldots~.
\end{equation}
\end{corollary}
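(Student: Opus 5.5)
Substitute the SVD \eqref{svd} into the formula \eqref{rnorm} of Theorem \ref{thm1}. Since $AA^*=U\Sigma\Sigma^*U^*$, setting $\widetilde{\bb}=U^*\bb=[\tilde b_1,\ldots,\tilde b_m]^*$ and $\sigma_j=0$ for $j>\min\{m,n\}$ gives
\begin{equation*}
\|\br_{\ell,\mu}\|_2^2=\sum_{j=1}^m \frac{\tilde b_j^2}{(\mu\sigma_j^2+1)^{2\ell}}.
\end{equation*}
All three claims will be read off from this sum.

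For strict monotonicity, compare the sums for $\ell$ and $\ell+1$ term by term. Each factor $(\mu\sigma_j^2+1)^{-2}$ equals $1$ when $\sigma_j=0$ and lies in $(0,1)$ when $\sigma_j>0$, because $\mu>0$. Hence each term of the $(\ell+1)$-sum is at most the corresponding term of the $\ell$-sum. The inequality is strict for every index with $\sigma_j>0$ and $\tilde b_j\ne0$.

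To obtain strictness it therefore suffices to show that at least one such index exists. This is exactly where the hypothesis $A^*\bb\ne\bzero$ enters. We have $V^*A^*\bb=\Sigma^*\widetilde{\bb}$, whose entries are $\sigma_j\tilde b_j$, and this vector is nonzero. So some $j$ has $\sigma_j\tilde b_j\ne0$, which is the index we need. This is the only point requiring care.

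For the limit \eqref{limit}, let $\ell\to\infty$ in the finite sum. Terms with $\sigma_j>0$ tend to $0$, and terms with $\sigma_j=0$ remain equal to $\tilde b_j^2$. The limit is therefore $\sum_{\sigma_j=0}\tilde b_j^2$. The columns $\bu_j$ of $U$ with $\sigma_j=0$ form an orthonormal basis of the null space of $A^*$, since $A^*\bu_j=\sigma_j\bv_j$ for $j\le\min\{m,n\}$ and $A^*\bu_j=\bzero$ for $j>n$. Hence this sum equals $\|\bb_0\|_2^2$, and taking square roots gives \eqref{limit}.

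When $m=n$ and $A$ has full rank, the null space of $A^*$ is trivial, so $\bb_0=\bzero$ and the limit is $0$.

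Finally, for \eqref{lim0}, fix $\ell$ and let $\mu\searrow0$. Each term is continuous in $\mu$ on $[0,\infty)$ and tends to $\tilde b_j^2$. The sum therefore tends to $\|\widetilde{\bb}\|_2^2=\|\bb\|_2^2$ by orthogonality of $U$.
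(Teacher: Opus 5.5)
Your proposal is correct and follows the paper's proof essentially step for step. It substitutes the SVD into \eqref{rnorm}, compares the sums term by term, gets strictness from an index with $\sigma_j\tilde b_j\neq 0$, and reads both limits off the finite sum. You also spell out why the $\bu_j$ with $\sigma_j=0$ form an orthonormal basis of the null space of $A^*$, which the paper leaves implicit.
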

\begin{proof} 
Let 
\begin{equation*}
\widetilde{\bb}=[\widetilde{b}_1,\widetilde{b}_2,\ldots,\widetilde{b}_{m}]^*=U^*\bb,
\end{equation*}
where $U$ is determined by the SVD \eqref{svd}. Then $A^*\bb\ne\bzero$ is equivalent to 
the existence of an index $1\le i\le {m}$, such that $\sigma_i\widetilde{b}_i\ne 0$, where
$\sigma_i$ is the $i$th singular value of $A$. Using the SVD, we obtain
\begin{eqnarray}
\nonumber
\|\br_{\ell,\mu}\|_2^2&=&\bb^*(\mu AA^*+I)^{-2\ell}\bb=
\widetilde{\bb}^*(\mu\Sigma\Sigma^*+I)^{-2\ell}\widetilde{\bb}\\
\label{bell}
&=&\sum_{j=1}^m\frac{\widetilde{b}_j^2}{(\mu\sigma_j^2+1)^{2\ell}}\\
\nonumber
&>&\sum_{j=1}^m\frac{\widetilde{b}_j^2}{(\mu\sigma_j^2+1)^{2\ell+2}}=
\|\br_{\ell+1,\mu}\|_2^2,
\end{eqnarray} 
where the last inequality follows from the fact that $\mu\sigma_j^2+1\ge 1$ for all $j$
and $\mu\sigma_i^2+1>1$.

We turn to the limit \eqref{limit}. It follows from \eqref{bell} that
\begin{equation*}
\lim_{\ell\rightarrow\infty} \|\br_{\ell,\mu}\|_2^2 = 
\lim_{\ell\rightarrow\infty} 
\sum_{j=1}^m\frac{\widetilde{b}_j^2}{(\mu\sigma_j^2+1)^{2\ell}} =
\sum_{j=1\atop \sigma_j=0}^m \widetilde{b}_j^2 = 
\bb_0^*\bb_0.
\end{equation*}
Finally, the limit \eqref{lim0} is an immediate consequence of the representation 
\eqref{rnorm}. This completes the proof. $\Box$
\end{proof}

\begin{theorem}\label{thm2}
Assume that $A^*\bb\ne\bzero$. Then the function $\mu\to\|\br_{\ell,\mu}\|_2^2$ is 
strictly decreasing and strictly convex for $\mu>0$. 
\end{theorem}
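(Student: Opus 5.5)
We use the spectral representation \eqref{bell} from the proof of Corollary \ref{cor1}, which writes the squared residual norm as a function of $\mu$:
\begin{equation*}
\phi(\mu)=\|\br_{\ell,\mu}\|_2^2=\sum_{j=1}^m \widetilde{b}_j^2\,(\mu\sigma_j^2+1)^{-2\ell},\qquad \mu>0,
\end{equation*}
where $\widetilde{\bb}=U^*\bb$ and $\sigma_j=0$ for $j>\min\{m,n\}$. Each term is a smooth function of $\mu$ on $(0,\infty)$. So it suffices to differentiate term by term and check the signs of $\phi'$ and $\phi''$. The hypothesis $A^*\bb\ne\bzero$ supplies an index $i$ with $\sigma_i\widetilde{b}_i\ne 0$. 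This index will make the inequalities strict.

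For the first derivative,
\begin{equation*}
\phi'(\mu)=-2\ell\sum_{j=1}^m \widetilde{b}_j^2\,\sigma_j^2\,(\mu\sigma_j^2+1)^{-2\ell-1}.
\end{equation*}
Every summand is nonnegative, and the $i$th summand is strictly positive. Hence $\phi'(\mu)<0$ for all $\mu>0$, and $\phi$ is strictly decreasing.

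For the second derivative,
\begin{equation*}
\phi''(\mu)=2\ell(2\ell+1)\sum_{j=1}^m \widetilde{b}_j^2\,\sigma_j^4\,(\mu\sigma_j^2+1)^{-2\ell-2}.
\end{equation*}
Again every summand is nonnegative and the $i$th is positive, because $\sigma_i\ne 0$ and $\widetilde{b}_i\ne 0$. Thus $\phi''>0$ on $(0,\infty)$, and a function with a strictly positive second derivative on an interval is strictly convex there.

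There is no real obstacle; the only point needing care is strictness. The summands with $\sigma_j=0$ contribute a constant $\|\bb_0\|_2^2$ to $\phi$ and vanish in both derivatives, so they cannot spoil strictness. We therefore only need the single index $i$ supplied by $A^*\bb\ne\bzero$, exactly as in Corollary \ref{cor1}. If one prefers to avoid the SVD, the same computation can be written as $\phi'(\mu)=-2\ell\,\bb^*AA^*(\mu AA^*+I)^{-2\ell-1}\bb$, which is a negative semidefinite quadratic form; strictness then follows because $A^*\bb\ne\bzero$.
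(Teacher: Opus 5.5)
Your argument is correct and is essentially the paper's proof. The paper also differentiates twice and checks signs, writing the derivatives as $-2\ell\,\bb^*AA^*(\mu AA^*+I)^{-2\ell-1}\bb$ and $2\ell(2\ell+1)\,\bb^*AA^*(\mu AA^*+I)^{-2\ell-2}AA^*\bb$. These are exactly your sums written in matrix form rather than SVD coordinates, and your index $i$ with $\sigma_i\widetilde{b}_i\ne 0$ makes explicit the strictness that the paper simply asserts follows from $A^*\bb\ne\bzero$.
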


\begin{proof}
The first and second derivatives are given by
\begin{eqnarray*}
\frac{\partial}{\partial\mu}\|\br_{\ell,\mu}\|_2^2&=& 
-2\ell\,\bb^* AA^*(\mu AA^*+I)^{-2\ell-1}\bb<0,\\
\frac{\partial^2}{\partial\mu^2}\|\br_{\ell,\mu}\|_2^2&=&
(2\ell+1)2\ell\,\bb^* AA^*(\mu AA^*+I)^{-2\ell-2}AA^*\bb>0.
\end{eqnarray*}
The fact that the right-hand side inequalities are strict follows from $A^*\bb\ne\bzero$. $\Box$
\end{proof}

We would like to determine the regularization parameter $\mu$ so that that 
$\bx_{\ell,\mu}$ satisfies the discrepancy principle, i.e., so that \eqref{disc2} holds. Moreover, we are interested in how $\mu=\mu_\ell$ depends on the number of iterations $\ell$ in 
\eqref{ittik}. 

\begin{corollary}\label{cor2}
Assume that $\delta>0$ and $\eta> 1$, and let $\mu=\mu_\ell>0$ satisfy \eqref{disc2} for
some $\ell\ge 1$. Then $\mu_\ell$ is a decreasing function of $\ell$.
\end{corollary}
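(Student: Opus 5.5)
Write $f_\ell(\mu)=\|\br_{\ell,\mu}\|_2^2=\bb^*(\mu AA^*+I)^{-2\ell}\bb$. The plan is to combine the strict monotonicity in $\ell$ from Corollary \ref{cor1} with the strict monotonicity in $\mu$ from Theorem \ref{thm2}. The idea is that increasing $\ell$ pushes the whole curve $\mu\mapsto f_\ell(\mu)$ down, so the level $\eta^2\delta^2$ is reached at a smaller $\mu$. We may assume $A^*\bb\ne\bzero$; otherwise $f_\ell(\mu)=\|\bb\|_2^2$ is constant in $\mu$, and \eqref{disc2} either has no solution or holds for every $\mu$, so the statement is void.

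\textbf{Step 1 (uniqueness).} By Theorem \ref{thm2}, $f_\ell$ is strictly decreasing on $(0,\infty)$, so for each $\ell$ there is at most one $\mu_\ell>0$ with $f_\ell(\mu_\ell)=\eta^2\delta^2$. Thus $\mu_\ell$ is well defined whenever it exists.

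\textbf{Step 2 (existence of $\mu_{\ell+1}$).} Suppose $\mu_\ell$ exists. By \eqref{lim0}, $f_{\ell+1}(\mu)\to\|\bb\|_2^2$ as $\mu\searrow0$. Since $f_\ell$ is strictly decreasing with limit $\|\bb\|_2^2$ at $0$, we have $\|\bb\|_2^2>f_\ell(\mu_\ell)=\eta^2\delta^2$. By Corollary \ref{cor1}, $f_{\ell+1}(\mu_\ell)<f_\ell(\mu_\ell)=\eta^2\delta^2$. The function $f_{\ell+1}$ is continuous on $(0,\mu_\ell]$, so the intermediate value theorem gives some $\mu_{\ell+1}\in(0,\mu_\ell)$ with $f_{\ell+1}(\mu_{\ell+1})=\eta^2\delta^2$.

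\textbf{Step 3 (comparison).} To check the inequality directly, suppose $\mu_{\ell+1}\ge\mu_\ell$. Then
\[
\eta^2\delta^2=f_{\ell+1}(\mu_{\ell+1})\le f_{\ell+1}(\mu_\ell)<f_\ell(\mu_\ell)=\eta^2\delta^2,
\]
where the first inequality uses Theorem \ref{thm2} and the second uses Corollary \ref{cor1}. This is a contradiction, so $\mu_{\ell+1}<\mu_\ell$. Iterating over $\ell$ shows that $\mu_\ell$ is strictly decreasing in $\ell$.

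The argument itself is short. The main obstacle is being careful about when $\mu_\ell$ exists at all. The hypotheses $\delta>0$ and $\eta>1$ keep the target level positive; if $\eta\delta\le\|\bb_0\|_2$, then by \eqref{limit} the solutions can only exist for finitely many $\ell$. I would therefore state the result as: for all $\ell$ for which \eqref{disc2} is solvable, $\mu_{\ell+1}<\mu_\ell$, with Step 2 showing that solvability at $\ell$ implies solvability at $\ell+1$.
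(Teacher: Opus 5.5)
Your proof is correct and follows the paper's route. Both arguments obtain $f_{\ell+1}(\mu_\ell)<f_\ell(\mu_\ell)=\eta^2\delta^2$ from Corollary~\ref{cor1} and then use the strict decrease in $\mu$ from Theorem~\ref{thm2}. Your Step~2 also proves that $\mu_{\ell+1}$ exists, which the paper simply assumes, and you get the strict inequality $\mu_{\ell+1}<\mu_\ell$ where the paper writes $\le$.

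One small correction to your closing remark: for $A^*\bb\ne\bzero$ we have $f_\ell(\mu)>\|\bb_0\|_2^2$ for every $\mu>0$ and every $\ell$. So when $\eta\delta\le\|\bb_0\|_2$, equation \eqref{disc2} has no solution for any $\ell$. In general it is solvable exactly when $\|\bb_0\|_2^2<\eta^2\delta^2<\|\bb\|_2^2$, independently of $\ell$. Appealing to \eqref{limit} and speaking of ``finitely many'' $\ell$ is therefore misleading.
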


\begin{proof}
It follows from Corollary \ref{cor1} that
\begin{equation*}
\|\br_{\ell+1,\mu_{\ell}}\|_2<\|\br_{\ell,\mu_{\ell}}\|_2=\eta\delta.
\end{equation*}
Let $\mu=\mu_{\ell+1}$ be such that $\br_{\ell+1,\mu_{\ell+1}}$ satisfies \eqref{disc2}.
Then, by Theorem \ref{thm2}, we have $\mu_{\ell+1}\le\mu_{\ell}$. We conclude that 
the values of $\mu=\mu_{\ell}$ that satisfy \eqref{disc2} decrease as $\ell$ increases. $\Box$
\end{proof}

The amount of regularization in \eqref{ittik} is determined by $1/\mu$. By Corollary 
\ref{cor2}, this parameter increases with $\ell$. 


\section{Partial Golub-Kahan bidiagonalization and Gauss quadrature}\label{sec3}
This section first discusses the application of the partial Golub-Kahan bidiagonalization
method to the solution of the iterated Tikhonov minimization problem 
\eqref{ittik} when the regularization matrix $L$ is the identity. A modification that 
allows a more general regularization matrix is commented on at the end of this section. 
The close connection between Golub-Kahan bidiagonalization and Gauss-type quadrature rules
allows the computation of inexpensive bounds that indicate how accurately the computed
solution satisfies the discrepancy principle \eqref{discrprinc}.

A nice discussion on the application of partial Golub-Kahan bidiagonalization to the solution of
linear discrete ill-posed problems is provided by Bj\"orck \cite{Bj1}. The application of Gauss-type quadrature rules to 
estimate how accurately the computed solution satisfies the discrepancy principle when 
Tikhonov regularization \eqref{tikh} is used is described in \cite{CR}; see also 
\cite{RSS}. A nice overview of the relation between Golub-Kahan bidiagonalization and 
Gauss quadrature is provided by Golub and Meurant \cite{GM}; see also Calvetti et al. 
\cite{CGR}. We will apply this approach to compute upper and lower bounds for
the left-hand side of \eqref{discrprinc}. To this end, introduce the functions
\begin{equation}\label{phi}
\phi_\ell(\mu)=\|\br_{\ell,\mu}\|_2^2,\qquad \ell=1,2,\ldots~,
\end{equation}
where ${\br}_{\ell,\mu}$ is defined by \eqref{rmu}. Several properties of these functions
have been established in Section \ref{sec2}. Here we are concerned with their inexpensive
approximation with the aid of Gauss quadrature as follows. Express $\phi_\ell(\mu)$ with
aid of a Stieltjes integral. The SVD of $A$ \eqref{svd} yields the spectral factorization
\begin{equation}\label{spectfact}
AA^*=U\Lambda U^*, 
\end{equation}
where
\begin{equation*}
\Lambda={\rm diag}[\lambda_1,\lambda_2,\ldots,\lambda_m]=\Sigma\Sigma^*\in\R^{m\times m}.
\end{equation*}
Thus, 
\begin{equation*}
\lambda_j=\left\{\begin{array}{cc} \sigma_j^2,~~ & 1\le j\le \min\lbrace n,m\rbrace, \\
                                   0,~~ & \min\lbrace n,m\rbrace<j\le m. \end{array}\right.
\end{equation*}
Using the notation in the proof of Corollary \ref{cor1} and the above definition of the
eigenvalues $\lambda_j$ of $AA^*$, we obtain 
\begin{equation*}
\phi_\ell(\mu)=\bb^*U(\mu\Lambda+I)^{-2\ell}U^*\bb =
\sum_{j=1}^m\frac{\widetilde{b}_j^2}{(\mu\lambda_j+1)^{2\ell}},\qquad \ell=1,2,\ldots~.
\end{equation*}
Following \cite{CGR,CR,GM}, we interpret the sum in the right-hand side as a Stieltjes 
integral associated with a nondecreasing piece-wise constant distribution function 
$\omega$ that has jumps at the eigenvalues $\lambda_j$. This allows us to express the
functions $\phi_\ell$ as
\begin{equation}\label{intgral}
\phi_\ell(\mu)=\int_0^\infty \frac{1}{(\mu t + 1)^{2\ell}}d\omega(t),\qquad 
\ell=1,2,\ldots~.
\end{equation}

\begin{corollary}
Assume that $A^*\bb\ne\bzero$ and let the function $\mu\to\phi_\ell(\mu)$ for some integer
$\ell\geq 1$ be defined by \eqref{phi} for $\mu>0$. Then the equation
equation
\begin{equation}\label{phieq}
\phi_\ell(\mu)=\tau
\end{equation}
has a unique solution $\mu$, such that $0<\mu<\infty$, for any $\tau$ that 
satisfies $\|{\bb}_0\|^2<\tau<\|{\bb}\|^2$, where ${\bb}_0$ denotes the
orthogonal projection of ${\bb}$ onto the null space of $A^*$.
\end{corollary}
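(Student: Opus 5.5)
The plan is an intermediate value argument combined with strict monotonicity. Fix $\ell\ge 1$ and regard $\phi_\ell(\mu)=\sum_{j=1}^m \widetilde{b}_j^2(\mu\lambda_j+1)^{-2\ell}$ as a function on $(0,\infty)$. This is a finite sum of rational functions whose denominators never vanish for $\mu\ge 0$, so $\phi_\ell$ is continuous, indeed smooth, on $[0,\infty)$. Theorem~\ref{thm2} gives that, when $A^*\bb\ne\bzero$, $\phi_\ell$ is strictly decreasing on $(0,\infty)$. Strict monotonicity yields \emph{uniqueness} at once: two distinct solutions $\mu_1<\mu_2$ of \eqref{phieq} would force $\phi_\ell(\mu_1)>\phi_\ell(\mu_2)$, which is impossible.

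For \emph{existence}, I will determine the limits at the two ends of $(0,\infty)$. As $\mu\searrow 0$, each term tends to $\widetilde{b}_j^2$, so $\phi_\ell(\mu)\to\sum_j\widetilde{b}_j^2=\|U^*\bb\|_2^2=\|\bb\|_2^2$. This is \eqref{lim0} in Corollary~\ref{cor1}. As $\mu\to\infty$, the terms with $\lambda_j>0$ tend to $0$ and the terms with $\lambda_j=0$ stay equal to $\widetilde{b}_j^2$. Hence $\phi_\ell(\mu)\to\sum_{\lambda_j=0}\widetilde{b}_j^2$. The columns of $U$ with $\lambda_j=0$ span the null space of $AA^*$, which equals the null space of $A^*$. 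So this sum is $\|\bb_0\|_2^2$, in the same way the limit \eqref{limit} was identified in the proof of Corollary~\ref{cor1}, except that here the limit is taken in $\mu$ rather than in $\ell$.

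Combining these, $\phi_\ell$ maps $(0,\infty)$ continuously and strictly decreasingly onto the open interval $(\|\bb_0\|_2^2,\|\bb\|_2^2)$. Both endpoints are excluded because a strictly decreasing function cannot attain its infimum or supremum on an open interval. For any $\tau$ strictly between these values, pick $\mu_a$ small with $\phi_\ell(\mu_a)>\tau$ and $\mu_b$ large with $\phi_\ell(\mu_b)<\tau$. The intermediate value theorem on $[\mu_a,\mu_b]$ then gives a solution $\mu\in(0,\infty)$ of \eqref{phieq}.

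The only step needing care is the identification of the limit as $\mu\to\infty$ with $\|\bb_0\|_2^2$. This requires checking that $\ker(AA^*)=\ker(A^*)$, which holds since $AA^*\by=\bzero$ implies $\|A^*\by\|_2^2=0$. It also requires using that $U$ is orthogonal, so the projection norm is the sum of the squared coefficients. Everything else follows directly from Theorem~\ref{thm2} and Corollary~\ref{cor1}.
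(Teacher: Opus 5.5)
Your proof is correct and follows the paper's argument: strict monotonicity from Theorem~\ref{thm2}, continuity with $\phi_\ell(\mu)\to\|\bb\|_2^2$ as $\mu\searrow 0$, and $\phi_\ell(\mu)\to\|\bb_0\|_2^2$ as $\mu\to\infty$, which together give existence and uniqueness. Your version is more complete than the paper's. The paper only asserts the $\mu\to\infty$ limit and also cites convexity, which is not needed, whereas you derive that limit from the spectral sum using $\ker(AA^*)=\ker(A^*)$.
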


\begin{proof}
By Theorem \ref{thm2}, the function $\mu\to\phi_\ell(\mu)$ is decreasing and convex for
$\mu\geq 0$, where $\phi_\ell(0)=\|\bb\|_2^2$ is defined by continuity. Moreover, 
\begin{equation*}
\lim_{\mu\rightarrow\infty}\phi_\ell(\mu)=\|\bb_0\|_2^2.
\end{equation*}
This shows the corollary. $\Box$
\end{proof}

Since $\phi_\ell(\mu)$ is a convex and decreasing function of $\mu$, equation \eqref{phieq} can be solved by Newton's method without safe-guarding; see \cite{CR} for details.
We note that the function $\nu\to\phi_\ell(1/\nu)$ might not be convex. The solution of the equation $\phi_\ell(1/\nu)=\tau$ by Newton's method therefore generally requires that Newton's method be safe-guarded. This is the reason why we use the regularization parameter $\mu$ in (\ref{ittik}) instead of the parameter $\nu=1/\mu$. Discussions on zero-finders can be found in \cite{BPR,CR,RS}.

Our solution method for \eqref{discrprinc} is based on partial Golub-Kahan 
bidiagonalization of the matrix $A$ with initial vector ${\bb}$. Application of 
$1\le q\ll\min\{m,n\}$ bidiagonalization steps gives the decompositions
\begin{equation}\label{lbd}
AV_q=U_{q+1}\bar{C}_q, \qquad A^*U_q = V_qC_q^*, 
\end{equation}
where the matrices $U_{q+1}=[\bu_1,\ldots,\bu_{q+1}]^*\in{\R}^{m\times(q+1)}$ and 
$V_q=[\bv_1,\ldots,\bv_q]^*\in{\R}^{n\times q}$ have orthonormal columns, 
$\bu_1=\bb/\|\bb\|_2$, and the lower bidiagonal matrix
\begin{equation}\label{Cbarmat}
\bar{C}_q=\left[ \begin{array} {ccccc}
\rho_1 & & & & \mbox{\Large 0} \\
\sigma_2& \rho_2 & & &  \\
 & \ddots & \ddots &  &  \\
 & & \sigma_{q-1} & \rho_{q-1} & \\
 & & & \sigma_q & \rho_q \\
\mbox{\Large 0} & & & & \sigma_{q+1} \\
\end{array}
\right]\in{\R}^{(q+1)\times q}
\end{equation}
has positive entries $\rho_j$ and $\sigma_j$. Moreover, $C_q\in\R^{q\times q}$ is made up 
of the first $q$ rows of $\bar{C}_q$ and $U_q\in{\R}^{m\times q}$ consists of the $q$ 
first columns of $U_{q+1}$. We assume that the number of bidiagonalization steps, $q$, is 
chosen small enough so that the decompositions (\ref{lbd}) with the stated properties 
exist. This is the generic situation. In the rare event that $\sigma_j>0$ for 
$1\leq j\leq q$, and $\sigma_{q+1}=0$, the computations simplify. We will not dwell on 
this unusual situation. To secure that the columns of the
matrices $U_{q+1}$ and $V_q$ are numerically orthogonal, we carry out partial Golub-Kahan
bidiagonalization with reorthogonalization. Since the number of steps, $q$, typically is 
quite small, this does not increase the computational effort much. 

\begin{theorem}\label{thm2b}
Introduce the functions
\begin{eqnarray}
\label{phiell}
\phi_{q,\ell}(\mu)&=&\|{\bb}\|_2^2 {\be}_1^*(\mu C_q C_q^*+I)^{-2\ell}{\be}_1, \\
\label{phibarell}
\bar{\phi}_{q+1,\ell}(\mu)&=&\|{\bb}\|_2^2 
{\be}_1^*(\mu\bar{C}_q\bar{C}_q^*+I)^{-2\ell}{\be}_1,
\end{eqnarray}
where $\be_1=[1,0,\ldots,0]^*$ denotes the first axis vector of appropriate dimension. The
function \eqref{phiell} is a $q$-node Gauss quadrature rule and the function 
\eqref{phibarell} is an $(q+1)$-node Gauss-Radau quadrature rule with a fixed node at the
origin, for the approximation of the integral \eqref{intgral}. Moreover,
\begin{equation}\label{phiellineq}
\phi_{q,\ell}(\mu)<\phi_{\ell}(\mu)<\bar{\phi}_{q+1,\ell}(\mu), \qquad \mu>0.
\end{equation}
\end{theorem}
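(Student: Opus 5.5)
We would follow the standard Golub--Kahan/Lanczos route of \cite{CGR,CR,GM}. The quadrature interpretation comes first, and the inequalities \eqref{phiellineq} then follow from the remainder formulas for Gauss and Gauss--Radau rules together with sign information on the derivatives of the integrand $f(t)=(\mu t+1)^{-2\ell}$.

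\emph{Step 1 (Lanczos connection).} Combining the two relations in \eqref{lbd} gives $AA^*U_q=U_{q+1}\bar{C}_qC_q^*$. The first $q$ rows of this relation show that $C_qC_q^*$ is the symmetric tridiagonal matrix produced by $q$ steps of the Lanczos process applied to $AA^*$ with initial vector $\bu_1=\bb/\|\bb\|_2$. Its positive off-diagonal entries follow from the positivity of $\rho_j$ and $\sigma_j$. By the classical theory (Golub and Meurant \cite{GM}), $\|\bb\|_2^2\be_1^*f(C_qC_q^*)\be_1$ is therefore the $q$-node Gauss rule for $\int f\,d\omega$, with $\omega$ the measure in \eqref{intgral}. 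Its nodes are the eigenvalues of $C_qC_q^*$ and its weights are $\|\bb\|_2^2$ times the squared first components of the normalized eigenvectors. Similarly, $\bar{C}_q\bar{C}_q^*\in\R^{(q+1)\times(q+1)}$ is tridiagonal. Its leading $q\times q$ block equals $C_qC_q^*$, and the remaining entries are modified in the last diagonal position. Since $\bar{C}_q$ has only $q$ columns, $\bar{C}_q\bar{C}_q^*$ is singular, so $0$ is an eigenvalue. The standard characterization of Gauss--Radau rules says that the $(q+1)$-point rule with a prescribed node $0$ is obtained by modifying only the last diagonal entry of the $(q+1)$-step Jacobi matrix so that $0$ becomes an eigenvalue. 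Hence \eqref{phibarell} is that Gauss--Radau rule. One checks that the subdiagonal entry $\rho_q\sigma_{q+1}$ matches the Lanczos coefficient $\beta_{q+1}$, which identifies the first $q+1$ recurrence coefficients.

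\emph{Step 2 (error formulas).} We use the remainder representations
\[
\phi_\ell(\mu)-\phi_{q,\ell}(\mu)=\frac{f^{(2q)}(\xi)}{(2q)!}\int\prod_{i=1}^q(t-t_i)^2\,d\omega(t),
\]
\[
\phi_\ell(\mu)-\bar{\phi}_{q+1,\ell}(\mu)=\frac{f^{(2q+1)}(\bar\xi)}{(2q+1)!}\int t\prod_{i=1}^q(t-\bar t_i)^2\,d\omega(t),
\]
with $\xi,\bar\xi$ in the convex hull of the support, contained in $[0,\infty)$. The derivatives are
\[
f^{(k)}(t)=(-1)^k\mu^k\frac{(2\ell+k-1)!}{(2\ell-1)!}(\mu t+1)^{-2\ell-k},
\]
so even-order derivatives are positive and odd-order derivatives are negative on $t\ge0$. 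Both integrals are nonnegative because $t\ge0$ on the support. Hence $\phi_{q,\ell}\le\phi_\ell\le\bar{\phi}_{q+1,\ell}$.

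\emph{Step 3 (strictness).} This is the main point requiring care. The integrals are strictly positive provided the measure $\omega$ has more than $q$ points of increase in $(0,\infty)$. This holds because the bidiagonalization is assumed to run $q$ steps without breakdown, with $\sigma_{q+1}>0$. Breakdown-free steps mean the Krylov space $\mathcal{K}_{q+1}(AA^*,\bb)$ has dimension $q+1$, which forces at least $q+1$ distinct support points, at least $q$ of them positive. The polynomial $t\prod(t-\bar t_i)^2$ can vanish on at most $q+1$ points including $0$, so it is positive somewhere on the support. The factor $\mu^k>0$ for $\mu>0$ gives strict inequalities. If needed, the counting argument for the Radau case would be made precise by noting that nonvanishing of $\sigma_{q+1}$ is equivalent to $\omega$ not being representable exactly by the $q$-node rule.
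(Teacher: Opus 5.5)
Your Steps 1 and 2 coincide with the paper's proof: the Lanczos relation obtained from \eqref{lbd}, the Gauss--Radau matrix as the one-entry modification of $C_{q+1}C_{q+1}^*$, and the sign of the even/odd derivatives of $(\mu t+1)^{-2\ell}$ in the remainder terms. Your strictness argument for the left-hand inequality is also fine: $\sigma_{q+1}>0$ gives at least $q+1$ points of increase of $\omega$, so $\int\prod_{i=1}^q(t-t_i)^2\,d\omega(t)>0$.

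The gap is the right-hand inequality in Step 3. From $\sigma_{q+1}>0$ you only get at least $q+1$ support points, at least $q$ of them positive. The polynomial $t\prod_{i=1}^q(t-\bar t_i)^2$ has $q+1$ zeros. So the inference ``it is positive somewhere on the support'' fails when the support has exactly $q+1$ points, one of them at the origin. The condition you announce at the start of Step 3, at least $q+1$ points of increase in $(0,\infty)$, is equivalent to $\dim\mathcal{K}_{q+1}(A^*A,A^*\bb)=q+1$, i.e.\ to $\rho_{q+1}>0$, not to $\sigma_{q+1}>0$. Your closing remark about $\sigma_{q+1}\ne 0$ only concerns exactness of the $q$-node Gauss rule and does not help on the Radau side.

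This is not just a presentational gap: the strict right-hand inequality actually fails in that case. Suppose $\bb$ has a nonzero component in the null space of $A^*$ and $\omega$ has exactly $q$ positive atoms. Then $\sigma_{q+1}>0$ but $\rho_{q+1}=0$, so $A^*U_{q+1}=V_q\bar C_q^*$. Hence $AA^*U_{q+1}=U_{q+1}\bar C_q\bar C_q^*$, and $\bar\phi_{q+1,\ell}\equiv\phi_\ell$. For instance, take $A={\rm diag}[1,0]$, $\bb=[1,1]^*$, $q=1$. Then $\rho_1=\sigma_2=1/\sqrt{2}$ and $\rho_2=0$, and $\phi_\ell(\mu)=\bar\phi_{2,\ell}(\mu)=1+(\mu+1)^{-2\ell}$ for all $\mu>0$.

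So to obtain the strict right-hand inequality you must add the (generic) hypothesis $\rho_{q+1}>0$, equivalently that $\omega$ has at least $q+1$ points of increase in $(0,\infty)$. Without it, only $\phi_\ell(\mu)\le\bar\phi_{q+1,\ell}(\mu)$ can be proved. The paper's own proof reads strictness off the strict sign of the derivatives alone and does not address this degenerate case either.
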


\begin{proof}
The decompositions \eqref{lbd} gives
\begin{equation}\label{lanczos}
AA^*U_q=U_q C_q C_q^*+\rho_q\sigma_{q+1}\bu_{q+1}\be_1^*,
\end{equation}
where we note that the matrix $T_q=C_qC_q^*$ is symmetric and tridiagonal. The expression
\eqref{lanczos} also can be determined by applying $q$ steps of the symmetric Lanczos 
process to the matrix $AA^*$ with initial vector $\bb$. The fact that $T_q$ is 
tridiagonal shows that the columns $\bu_j$ of $U_q$ satisfy a three-term recurrence with 
the nontrivial entries of $T_q$ being the recursion coefficients. 

Let $f$ be an analytic function defined on the convex hull of the spectrum of $AA^*$.
Then the quantity $\|\bb\|_2^2\be_1^*f(T_q)\be_1$ can be interpreted as a $q$-node Gauss 
quadrature rule for the approximation of $\bb^*f(AA^*)\bb$. The underlying measure is 
$d\omega$ in \eqref{intgral} and the nontrivial entries of the matrix $T_q$ are the 
recurrence coefficients for orthonormal polynomials associated with this measure. Thus,
the orthonormal polynomials satisfy a recurrence relation with the same recursion
coefficients as the columns of the matrix $U_q$. This provides the connection between the
symmetric Lanczos process applied to $AA^*$ and Gauss quadrature. 

Let $\PP_k$ denote the
set of all polynomials of degree less than or equal to $k$. The Gauss quadrature rule
$\|\bb\|_2^2\be_1^*f(T_q)\be_1$ is exact for all polynomials in $\PP_{2q-1}$, i.e.,
\begin{equation*}
\int_0^\infty f(t)d\omega(t)=\|\bb\|_2^2\be_1^*f(T_q)\be_1,\qquad \forall f\in\PP_{2q-1}.
\end{equation*}
Details of the relation between the symmetric Lanczos process and Gauss quadrature are 
provided by Golub and Meurant \cite{GM}; discussions relevant for the application of the 
present paper can be found in \cite{CGR,CR}.

Introduce the tridiagonal matrices 
\begin{equation*}
T_{q+1}=C_{q+1}C_{q+1}^*\in\R^{(q+1)\times(q+1)},\qquad
\bar{T}_{q+1}=\bar{C}_q\bar{C_q}^*\in\R^{(q+1)\times(q+1)}.
\end{equation*}
The matrix $\bar{T}_{q+1}$ differs from $T_{q+1}$ only in the last diagonal entry. By the 
definition of $\bar{T}_{q+1}$, this matrix has one vanishing eigenvalue. It follows that 
$\|\bb\|_2^2\be_1^*f(\bar{T}_{q+1})\be_1$ is a $(q+1)$-node Gauss-Radau quadrature rule 
with a specified node at the origin. This quadrature rule satisfies
\begin{equation*}
\int_0^\infty f(t)d\omega(t)=\|\bb\|_2^2\be_1^*f(\bar{T}_{q+1})\be_1,\qquad \forall 
f\in\PP_{2q};
\end{equation*}
see \cite{CGR,CR} for proofs and discussions.

We turn to the inequalities \eqref{phiellineq} and first consider the remainder for the
Gauss rule $\|\bb\|_2^2\be_1^*\psi(T_q)\be_1$, where
\begin{equation}\label{psi}
\psi(t)=(\mu t +1)^{-2\ell}.
\end{equation}
Since for any positive fixed value of $\mu$, the derivatives of $\psi$ with respect to $t$
of even order are strictly positive, the remainder is positive. This yields the left-hand
side inequality in \eqref{phiellineq}. The derivatives of $\psi$ of odd order are strictly
negative. The remainder for Gauss-Radau quadrature therefore is negative. This gives the
right-hand side inequality in \eqref{phiellineq}. $\Box$
\end{proof}

We would like to determine $\mu$ so that \eqref{discrprinc} holds. Therefore, we seek
to find a possibly small value of $q$ and values of $\mu$ so that
\begin{equation}\label{2ineq}
\delta^2\leq\phi_{q,\ell}(\mu),\qquad \bar{\phi}_{q+1,\ell}(\mu)\leq \eta^2\delta^2.
\end{equation}
Then it follows from (\ref{phiellineq}) and (\ref{2ineq}) that $\delta^2<\phi_\ell(\mu)<\eta^2\delta^2$. In view of \eqref{discrprinc} and \eqref{phi}, the computed solution 
$\bx_{\ell,\mu}$ satisfies the discrepancy principle.

We conclude this section with a discussion of some properties of the functions
$\phi_{q,\ell}$ and $\bar{\phi}_{q+1,\ell}$. 

\begin{theorem}\label{thm2a}
Assume that $\rho_q\sigma_{q+1}>0$. Then 
\begin{eqnarray}
\label{philim}
\lim_{\mu\rightarrow\infty}\phi_{q,\ell}(\mu)&=&0, \\
\label{barphilim}
\lim_{\mu\rightarrow\infty}\bar{\phi}_{q+1,\ell}(\mu)&=&\omega_0, 
\end{eqnarray}
where $\omega_0>0$ is the weight of the $(q+1)$-point Gauss-Radau quadrature rule 
associated with the node at the origin.
\end{theorem}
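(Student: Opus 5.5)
The plan is to diagonalize the small tridiagonal matrices and pass to the limit term by term. Write the spectral factorizations $T_q=C_qC_q^*=W\Theta W^*$ and $\bar{T}_{q+1}=\bar{C}_q\bar{C}_q^*=\bar W\bar\Theta\bar W^*$, with $W,\bar W$ orthogonal. Then
\begin{equation*}
\phi_{q,\ell}(\mu)=\|\bb\|_2^2\sum_{j=1}^q\frac{(W^*\be_1)_j^2}{(\mu\theta_j+1)^{2\ell}},\qquad
\bar{\phi}_{q+1,\ell}(\mu)=\|\bb\|_2^2\sum_{j=1}^{q+1}\frac{(\bar W^*\be_1)_j^2}{(\mu\bar\theta_j+1)^{2\ell}}.
\end{equation*}
These are exactly the Gauss and Gauss--Radau rules of Theorem \ref{thm2b}, with nodes $\theta_j$, $\bar\theta_j$ and weights $\|\bb\|_2^2(W^*\be_1)_j^2$, $\|\bb\|_2^2(\bar W^*\be_1)_j^2$. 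As $\mu\to\infty$, every term with a positive node tends to $0$, and every term with a zero node stays equal to its weight. So both limits reduce to counting the zero eigenvalues and identifying their weights.

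For \eqref{philim}, it suffices to show that $T_q$ is positive definite. Since $C_q$ is lower bidiagonal with positive diagonal entries $\rho_1,\ldots,\rho_q$, we have $\det C_q=\prod_j\rho_j>0$. Hence $C_qC_q^*$ is symmetric positive definite, all $\theta_j>0$, and every term tends to zero.

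For \eqref{barphilim}, I will first show that $\bar{T}_{q+1}$ has exactly one zero eigenvalue. The matrix $\bar C_q\in\R^{(q+1)\times q}$ has full column rank $q$, because its top $q\times q$ block $C_q$ is nonsingular. Therefore $\bar C_q\bar C_q^*$ has rank $q$, so it has a simple eigenvalue $0$ with a one-dimensional null space $\mathcal N(\bar C_q^*)$, and its remaining $q$ eigenvalues are positive. Taking the limit gives $\|\bb\|_2^2(\bar W^*\be_1)_{0}^2$, the Gauss--Radau weight at the origin, which is $\omega_0$.

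The main obstacle is showing that $\omega_0>0$, i.e.\ that the null vector $\bz$ of $\bar C_q^*$ has a nonzero first component. I would solve $\bar C_q^*\bz=\bzero$ by back substitution. Since $\bar C_q^*$ is upper bidiagonal, the equations read $\rho_j z_j+\sigma_{j+1}z_{j+1}=0$ for $j=1,\ldots,q$, so $z_j=-(\sigma_{j+1}/\rho_j)z_{j+1}$. Normalizing $z_{q+1}=1$ gives $z_1=\pm\prod_{j=1}^q\sigma_{j+1}/\rho_j$. This is nonzero because all $\sigma_j$ and $\rho_j$ are positive; the hypothesis $\rho_q\sigma_{q+1}>0$ is exactly what guarantees that the last factor $\sigma_{q+1}$ is nonzero, and the earlier factors are positive by the assumptions on the bidiagonalization. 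Hence $\omega_0=\|\bb\|_2^2 z_1^2/\|\bz\|_2^2>0$.
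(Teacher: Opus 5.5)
Your proof is correct and follows the paper's route: positive definiteness of $C_qC_q^*$ gives the first limit, and the fact that $\bar{C}_q\bar{C}_q^*$ is positive semidefinite with a simple zero eigenvalue gives the second. You also supply details the paper only asserts or defers to the $\ell=1$ literature, in particular the explicit null vector of $\bar{C}_q^*$, which shows that $\omega_0>0$.
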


\begin{proof}
This result is shown in \cite{CR} for the case when $\ell=1$. The result carries over to
the situation when $\ell>1$. The limit \eqref{philim} follows from the fact that under the
assumption of the theorem, the matrix $C_qC_q^*$ is symmetric positive definite. The 
matrix $\bar{C}_q\bar{C}_q^*$ is symmetric positive semidefinite with one eigenvalue at 
the origin. This yields the limit \eqref{barphilim}. $\Box$
\end{proof}

\begin{theorem}\label{thm2c}
The function $\mu\to\phi_{q,\ell}(\mu)$ defined by (\ref{phiell}) is strictly decreasing
and convex. Assume that $\rho_q\sigma_{q+1}>0$. Then the equation
\begin{equation}\label{phielleq}
\phi_{q,\ell}(\mu)=\tau
\end{equation}
has a unique solution $\mu$ such that $0<\mu<\infty$ for any $\tau$ with 
$0<\tau<\|{\bb}\|_2^2$.
\end{theorem}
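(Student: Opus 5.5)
The plan is to mirror the argument of Theorem \ref{thm2} and the Corollary following \eqref{intgral}, with $AA^*$ replaced by the small matrix $T_q=C_qC_q^*$. Set $\beta=\|\bb\|_2$ and take the spectral factorization $T_q=W\Theta W^*$, where $W$ is orthogonal and $\Theta={\rm diag}[\theta_1,\ldots,\theta_q]$ with $\theta_j\ge 0$. With $\bm{w}=W^*\be_1$ (in the LaTeX write $\widetilde{\bm w}$ via an existing macro, e.g.\ $\widetilde{\bb}$-style notation with a new symbol $\gamma_j$), we have
\begin{equation*}
\phi_{q,\ell}(\mu)=\beta^2\sum_{j=1}^q\frac{\gamma_j^2}{(\mu\theta_j+1)^{2\ell}},\qquad \gamma_j=\be_j^*W^*\be_1,\qquad \sum_{j=1}^q\gamma_j^2=1.
\end{equation*}
Equivalently, I will differentiate the matrix expression directly:
\begin{eqnarray*}
\phi_{q,\ell}'(\mu)&=&-2\ell\beta^2\,\be_1^*T_q(\mu T_q+I)^{-2\ell-1}\be_1,\\
\phi_{q,\ell}''(\mu)&=&2\ell(2\ell+1)\beta^2\,\be_1^*T_q(\mu T_q+I)^{-2\ell-2}T_q\be_1 .
\end{eqnarray*}

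Since $T_q$ and $(\mu T_q+I)^{-1}$ commute and are positive semidefinite, the first derivative is $\le 0$ and the second is $\ge 0$. This gives monotonicity and convexity. For strictness it suffices that $T_q\be_1\neq\bzero$, i.e.\ that $C_q^*\be_1\ne\bzero$. This holds because the $(1,1)$ entry of $C_q$ is $\rho_1>0$, so $\be_1^*T_q\be_1=\rho_1^2+\sigma_2^2>0$. Hence $\phi_{q,\ell}'(\mu)<0$ and $\phi_{q,\ell}''(\mu)>0$ for all $\mu>0$. (Equivalently, some $\gamma_j\theta_j\ne0$.)

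For the equation \eqref{phielleq}, I use continuity on $[0,\infty)$ together with the endpoint values. At the left end, $\phi_{q,\ell}(0)=\beta^2\be_1^*\be_1=\|\bb\|_2^2$. At the right end, under the assumption $\rho_q\sigma_{q+1}>0$, Theorem \ref{thm2a}, equation \eqref{philim}, gives $\phi_{q,\ell}(\mu)\to0$ as $\mu\to\infty$. If one wants this directly: all $\rho_j>0$ makes $C_q$ nonsingular, so all $\theta_j>0$ and each term tends to $0$. Strict monotonicity and the intermediate value theorem then give existence and uniqueness of $\mu\in(0,\infty)$ for every $\tau\in(0,\|\bb\|_2^2)$. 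Since $\phi_{q,\ell}(0)=\|\bb\|_2^2>\tau$ and $\phi_{q,\ell}$ is strictly decreasing, the root is strictly positive.

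The only delicate point is strictness, i.e.\ excluding $T_q\be_1=\bzero$. It follows immediately from the positivity of the bidiagonal entries in \eqref{Cbarmat}, so I do not expect a real obstacle. The positivity of all $\rho_j$ is also what makes the limit zero rather than a positive Radau-type weight, which is the contrast with \eqref{barphilim}.
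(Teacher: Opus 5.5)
Your proof is correct and takes essentially the paper's route. The paper only cites the $\ell=1$ argument of \cite{CR} and says it carries over, and your argument is that carry-over: the derivative computation of Theorem~\ref{thm2} with $C_qC_q^*$ in place of $AA^*$, then $\phi_{q,\ell}(0)=\|\bb\|_2^2$, then $\phi_{q,\ell}(\mu)\to 0$ from the nonsingularity of $C_q$, and finally the intermediate value theorem. One harmless slip: since $C_q^*\be_1=\rho_1\be_1$, one has $\be_1^*C_qC_q^*\be_1=\rho_1^2$, not $\rho_1^2+\sigma_2^2$ (that is the $(1,1)$ entry of $C_q^*C_q$); $\rho_1^2>0$ still gives $C_qC_q^*\be_1\neq\bzero$, so strictness is unaffected.
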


\begin{proof}
A proof for the case when $\ell=1$ is provided in \cite{CR}. This proof carries over to 
the present situation. $\Box$
\end{proof}

\begin{theorem}\label{thm3}
Assume that the partial Golub-Kahan decompositions (\ref{lbd}) exists with 
$\rho_\ell\sigma_{\ell+1}>0$. Then the functions $\phi_{q,\ell}$ and $\phi_{q+1,\ell}$ are
well defined and satisfy
\begin{equation}\label{phiellmon}
\phi_{q,\ell}(\mu)<\phi_{q+1,\ell}(\mu), \qquad \mu\geq 0.
\end{equation}
\end{theorem}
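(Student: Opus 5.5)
We must show that the $q$-node Gauss rule underestimates the integral \eqref{intgral} by strictly more than the $(q+1)$-node Gauss rule does, for $\psi(t)=(\mu t+1)^{-2\ell}$ as in \eqref{psi}. At $\mu=0$ both sides equal $\|\bb\|_2^2$, so the strict inequality can only hold for $\mu>0$. The statement should therefore be read with $\mu>0$; we will note this in the proof.

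The plan is to use the Gauss remainder formula. For a $k$-node Gauss rule $\mathcal{G}_k$ associated with the measure $d\omega$, and $\psi\in C^{2k}$ on the convex hull of the support, we have
\begin{equation*}
\int_0^\infty\psi(t)\,d\omega(t)-\mathcal{G}_k(\psi)=\int_0^\infty \psi[t,t_1,t_1,\ldots,t_k,t_k]\prod_{i=1}^k(t-t_i)^2\,d\omega(t),
\end{equation*}
where the $t_i$ are the nodes (the eigenvalues of $T_k=C_kC_k^*$) and $\psi[\cdots]$ is a divided difference. Since $\psi^{(j)}(t)=(-\mu)^j(2\ell)(2\ell+1)\cdots(2\ell+j-1)(\mu t+1)^{-2\ell-j}$, the function $\psi$ is completely monotone in $t$. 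The direct comparison of remainders is awkward, however, so we argue through an interpolation identity instead.

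The key step uses the fact that $\mathcal{G}_q(\psi)=\mathcal{G}_{q+1}(p)$, where $p\in\PP_{2q-1}$ is the Hermite interpolant of $\psi$ at the $q$ Gauss nodes. This holds because $\mathcal{G}_q(\psi)=\mathcal{G}_q(p)=\int p\,d\omega=\mathcal{G}_{q+1}(p)$, using exactness of both rules on $\PP_{2q-1}$. Consequently
\begin{equation*}
\phi_{q+1,\ell}(\mu)-\phi_{q,\ell}(\mu)=\|\bb\|_2^2\,\mathcal{G}_{q+1}(\psi-p)=\|\bb\|_2^2\sum_{i=1}^{q+1}w_i^{(q+1)}\,\psi[s_i,t_1,t_1,\ldots,t_q,t_q]\prod_{j=1}^{q}(s_i-t_j)^2,
\end{equation*}
where the $s_i$ are the eigenvalues of $T_{q+1}$ and the $w_i^{(q+1)}$ are the corresponding weights. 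Each divided difference of order $2q$ equals $\psi^{(2q)}(\xi)/(2q)!>0$ for some $\xi$, and the weights are positive.

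Hence the difference is nonnegative. It is strictly positive as soon as some $s_i$ differs from all $t_j$. This is the main point to secure: we need that the eigenvalues of $T_q$ and $T_{q+1}$ do not coincide. The assumption $\rho_q\sigma_{q+1}>0$ makes $T_{q+1}$ an unreduced (irreducible) symmetric tridiagonal matrix, and $T_q$ is its leading principal submatrix. By Cauchy interlacing, which is strict for unreduced tridiagonal matrices, the eigenvalues of $T_q$ strictly interlace those of $T_{q+1}$, so no $s_i$ equals any $t_j$. Every term in the sum is then strictly positive, which gives \eqref{phiellmon} for $\mu>0$.

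Well-definedness comes from the existence of the decompositions \eqref{lbd} with positive entries: $C_qC_q^*$ and $C_{q+1}C_{q+1}^*$ are positive semidefinite, so $\mu C C^*+I$ is invertible for $\mu\ge0$. We read the hypothesis $\rho_\ell\sigma_{\ell+1}>0$ as the intended $\rho_q\sigma_{q+1}>0$ (together with the existence of step $q+1$).

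As an alternative route for the strictness step, one can apply Theorem \ref{thm2b} to the $(q+1)$-step decomposition viewed as the exact integral against the discrete measure given by the spectrum of $T_{q+1}$. Then $\phi_{q+1,\ell}$ plays the role of $\phi_\ell$, the Gauss rule for this measure is $\phi_{q,\ell}$, and the left inequality of \eqref{phiellineq} gives the result directly, provided that measure has at least $q+1$ support points, which again follows from irreducibility.
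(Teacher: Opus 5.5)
Your proof is correct. The paper gives no in-paper argument for this theorem: its proof is only a reference to \cite{LRW}, with \cite{CR} for a related result. So your proposal takes a different route in the sense that it is self-contained.

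Your main route writes $\phi_{q+1,\ell}(\mu)-\phi_{q,\ell}(\mu)$ as the $(q+1)$-node Gauss rule applied to the Hermite interpolation error $\psi-p$. Here $p\in\PP_{2q-1}$ matches $\psi$ and $\psi'$ at the eigenvalues of $C_qC_q^*$. The argument needs only three facts:
\begin{itemize}
\item both rules are exact on $\PP_{2q-1}$, equivalently $C_qC_q^*$ is the leading principal submatrix of the tridiagonal matrix $C_{q+1}C_{q+1}^*$;
\item the $(q+1)$-node weights are positive;
\item $\psi^{(2q)}>0$ on $[0,\infty)$ when $\mu>0$.
\end{itemize}
This makes the sign of every term explicit. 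It also shows that the hypothesis $\rho_q\sigma_{q+1}>0$ enters exactly once, to make $C_{q+1}C_{q+1}^*$ unreduced.

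You need less than strict interlacing at that point. An unreduced $(q+1)\times(q+1)$ tridiagonal matrix has $q+1$ distinct eigenvalues, so at least one of them is not a node of the $q$-point rule. That single term is already positive.

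Your alternative route also works. Golub--Kahan bidiagonalization of $C_{q+1}$ with initial vector $\be_1$ simply reproduces $C_q$ and $\bar{C}_q$. Hence $\phi_{q,\ell}$ is the $q$-node Gauss rule for the discrete measure defined by $C_{q+1}C_{q+1}^*$ and $\be_1$, and the left inequality in \eqref{phiellineq} applies. This is shorter, but it inherits the remainder argument of Theorem~\ref{thm2b}, which the paper only sketches. The interpolation argument stands on its own.

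Both of your corrections to the statement are right:
\begin{itemize}
\item At $\mu=0$ both sides equal $\|\bb\|_2^2$, so the inequality can be strict only for $\mu>0$.
\item The hypothesis should read $\rho_q\sigma_{q+1}>0$, with step $q+1$ available so that $C_{q+1}$ is defined.
\end{itemize}

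One cosmetic slip: if $\mathcal{G}_k$ denotes the Gauss rule for $d\omega$ itself, whose total mass is $\|\bb\|_2^2$, then the extra factor $\|\bb\|_2^2$ in your difference formula is counted twice. This does not affect the sign.
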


\begin{proof}
A proof can be found in \cite{LRW}. A related result is show in \cite{CR}. $\Box$
\end{proof}


\section{Computation of approximate solutions}\label{sec4}
We describe how to evaluate an approximate solution of equation (\ref{ittik}) and start
with the computation of an approximate solution of \eqref{tikh}. Assume that the partial 
Golub-Kahan decomposition (\ref{lbd}) of $A$ and a suitable value $\mu>0$ of the 
regularization parameter are available. We first seek to determine an approximate solution
of \eqref{tikh} of the form
\begin{equation}\label{xell}
{\bx}_{\mu}=V_q{\by}_{\mu},
\end{equation}
where ${\by}_{\mu}\in{\R}^q$ solves the Galerkin equation 
\begin{equation}\label{gal1}
V_q^*(A^*A+\mu^{-1} I)V_q{\by}_{\mu}=V_q^*A^*{\bb}.
\end{equation}
Then \eqref{xell} is an approximate solution of (\ref{tikh}). Using the right-hand side 
decomposition in (\ref{lbd}) and the fact that $U_{q+1}^*\bb=\be_1\|\bb\|_2$, equation 
(\ref{gal1}) simplifies to
\begin{equation}\label{galsimp}
(\bar{C}_q^*\bar{C}_q+{\mu}^{-1} I){\by}_{\mu}=\bar{C}_q^*{\be}_1\|\bb\|_2.
\end{equation}
These are the normal equations associated with the least-squares problem
\begin{equation}\label{ysol}
\min_{{\sby}_{\mu}\in {\R}^q}\left\|
 \left[ \begin{array} {c} \bar{C}_q \\ \mu^{-1/2}I \end{array} \right]
{\by}_{\mu} - {\be}_1\|\bb\|_2  \right\|_2.
\end{equation}
We determine the solution ${\by}_{\mu}$ of (\ref{galsimp}) by solving (\ref{ysol}), 
because the condition number of the system matrix in \eqref{ysol} is the square root of 
the condition number of the system matrix of \eqref{galsimp}. The least-squares problem
\eqref{ysol} can be computed in only ${\mathcal O}(q)$ arithmetic floating point 
operations by using Givens rotations. 

\begin{theorem}
Let ${\by}_{\mu}$ solve (\ref{galsimp}), and let $\bx_{\mu}$ be given by \eqref{xell}. 
Then 
\begin{equation}\label{discxell}
\|{\bb}-A{\bx}_{\mu}\|_2=\bar{\phi}_{q+1,1}^{1/2}(\mu),
\end{equation}
where $\bar{\phi}_{q+1,1}$ is defined by \eqref{phibarell}.
\end{theorem}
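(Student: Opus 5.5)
The plan is to reduce the residual to the $(q+1)$-dimensional space spanned by the columns of $U_{q+1}$, and then express the reduced residual as a resolvent of $\bar{C}_q\bar{C}_q^*$ applied to $\be_1$.

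\textbf{Step 1: reduce the residual.} Since $\bx_\mu=V_q\by_\mu$, the left-hand decomposition in \eqref{lbd} gives $A\bx_\mu=U_{q+1}\bar{C}_q\by_\mu$. Because $\bb=\|\bb\|_2U_{q+1}\be_1$, we obtain
\begin{equation*}
\bb-A\bx_\mu=U_{q+1}\left(\|\bb\|_2\be_1-\bar{C}_q\by_\mu\right).
\end{equation*}
The columns of $U_{q+1}$ are orthonormal, so
\begin{equation*}
\|\bb-A\bx_\mu\|_2=\|\,\|\bb\|_2\be_1-\bar{C}_q\by_\mu\|_2 .
\end{equation*}

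\textbf{Step 2: rewrite the reduced residual.} From \eqref{galsimp}, $\by_\mu=(\bar{C}_q^*\bar{C}_q+\mu^{-1}I)^{-1}\bar{C}_q^*\be_1\|\bb\|_2$. Use the push-through identity \eqref{eq1} with $A$ replaced by $\bar{C}_q$:
\begin{equation*}
(\bar{C}_q^*\bar{C}_q+\mu^{-1}I)^{-1}\bar{C}_q^*=\bar{C}_q^*(\bar{C}_q\bar{C}_q^*+\mu^{-1}I)^{-1}.
\end{equation*}
This yields
\begin{equation*}
\|\bb\|_2\be_1-\bar{C}_q\by_\mu=\|\bb\|_2\left(I-\bar{C}_q\bar{C}_q^*(\bar{C}_q\bar{C}_q^*+\mu^{-1}I)^{-1}\right)\be_1 .
\end{equation*}
Write $M=\bar{C}_q\bar{C}_q^*$. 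The matrix in parentheses equals
\begin{equation*}
I-M(M+\mu^{-1}I)^{-1}=\mu^{-1}(M+\mu^{-1}I)^{-1}=(\mu M+I)^{-1}.
\end{equation*}
This is exactly the manipulation used for $\ell=1$ in the proof of Theorem \ref{thm1}.

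\textbf{Step 3: take norms.} The matrix $(\mu M+I)^{-1}$ is symmetric, so squaring the norm gives
\begin{equation*}
\|\bb\|_2^2\,\be_1^*(\mu\bar{C}_q\bar{C}_q^*+I)^{-2}\be_1=\bar{\phi}_{q+1,1}(\mu),
\end{equation*}
by \eqref{phibarell}. Taking square roots gives \eqref{discxell}.

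The main point requiring care is Step 1. It uses the relation $AV_q=U_{q+1}\bar{C}_q$ with the rectangular matrix $\bar{C}_q$, not $C_q$, and this is precisely why the Gauss–Radau function $\bar{\phi}_{q+1,1}$ appears rather than the Gauss function $\phi_{q,1}$. The remaining steps are routine algebra. One also needs $M+\mu^{-1}I$ to be invertible, which holds because $M$ is positive semidefinite and $\mu>0$.
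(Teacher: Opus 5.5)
Your proof is correct and follows the paper's argument. Like the paper, you write $A\bx_\mu=U_{q+1}\bar{C}_q\by_\mu$ and $\bb=\|\bb\|_2U_{q+1}\be_1$, factor out $U_{q+1}$, apply \eqref{eq1} and the resolvent identity with $A$ replaced by $\bar{C}_q$, and take norms. Your identity $I-\bar{C}_q\bar{C}_q^*(\bar{C}_q\bar{C}_q^*+\mu^{-1}I)^{-1}=(\mu\bar{C}_q\bar{C}_q^*+I)^{-1}$ has the correct sign; \eqref{eq2} as printed has a sign slip that produces the spurious minus sign in the paper's residual, which is harmless because only the norm is needed.
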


\begin{proof}
We obtain from \eqref{xell} and \eqref{galsimp} that
\begin{eqnarray*}
\bb-A\bx_{\mu}&=&
\bb-AV_q(\bar{C}_q^*\bar{C}_q+\mu^{-1}I)^{-1}\bar{C}_q^*\be_1\|\bb\|_2\\
&=& \bb-U_{q+1}\bar{C}_q(\bar{C}_q^*\bar{C}_q+\mu^{-1}I)^{-1}\bar{C}_q^*\be_1\|\bb\|_2\\
&=& U_{q+1}(I-\bar{C}_q(\bar{C}_q^*\bar{C}_q+\mu^{-1}I)^{-1}\bar{C}_q^*)\be_1
\|\bb\|_2.
\end{eqnarray*}
Using the identities \eqref{eq1} and \eqref{eq2} with $A$ replaced by $\bar{C}_q$ gives
\begin{equation*}
\bb-A\bx_{\mu}=-U_{q+1}(\mu\bar{C}\bar{C}^*+I)^{-1}\be_1\|\bb\|_2.
\end{equation*}
This shows \eqref{discxell}. $\Box$
\end{proof}

We turn to the iterated Tikhonov method. Let $\by_\mu$ solve the Galerkin equation 
\eqref{gal1}, and define $\bx_{1,\mu}=V_q\by_\mu$. The Galerkin equations associated with 
the iterated Tikhonov method \eqref{ittik2} with $\bx_{k,\mu}=V_q\by_{k,\mu}$ are given by
\begin{equation}\label{ittikgal}
V_q^*(A^*A+\mu^{-1}I)V_q\by_{k+1,\mu}=\mu^{-1}\by_{k,\mu}+V_q^*A^*\bb,\qquad 
k=1,2,\ldots,\ell-1.
\end{equation}
Simplifications analogous to those that gave \eqref{galsimp} yield
\begin{equation*}
(\bar{C}_q^*\bar{C}_q+\mu^{-1}I)\by_{k+1,\mu}=
\mu^{-1}\by_{k,\mu}+\bar{C}_q^*\be_1\|\bb\|_2, \qquad k=1,2,\ldots,\ell-1,
\end{equation*}
and we obtain similarly to the expression \eqref{xmu},
\begin{equation}\label{ymu}
\by_{\ell,\mu}=\sum_{j=1}^\ell \frac{1}{\mu^{j-1}}(\bar{C}_q^*\bar{C}_q+\mu^{-1}I)^{-j}
\bar{C}_q^*\be_1\|\bb\|.
\end{equation}
Consider the residual vector
\begin{eqnarray*}
\br_{\ell,\mu}&=&\bb-AV_q\by_{\ell,\mu}=\bb-U_{q+1}\bar{C}_q\by_{\ell,\mu}\\
&=&U_{q+1}\left(I-\sum_{j=1}^\ell \frac{1}{\mu^{j-1}}
\bar{C}_q\left(\bar{C}_q^*\bar{C}_q+\mu^{-1}I\right)^{-j} \bar{C}_q^*\right)\be_1\|\bb\|.
\end{eqnarray*}
Using the identities \eqref{eq1} and \eqref{eq2} with $A$ replaced by $\bar{C}_q$ allows
us to simplify the above expression to
\begin{equation}\label{rellmu}
\br_{\ell,\mu}=-U_{q+1}(\mu \bar{C}\bar{C}^*+I)^{-\ell}\be_1\|\bb\|_2.
\end{equation}
We have shown the following result.

\begin{theorem}\label{thm5}
Let ${\by}_{\ell,\mu}$ be defined by (\ref{ymu}) and let 
$\bx_{\ell,\mu}=V_q\by_{\ell,\mu}$. Then 
\begin{equation}\label{discxell2}
\|{\bb}-A{\bx}_{\ell,\mu}\|_2=\bar{\phi}_{q+1,\ell}^{1/2}(\mu).
\end{equation}
\end{theorem}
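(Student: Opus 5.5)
The plan is to compute the residual $\bb-A\bx_{\ell,\mu}$ in closed form inside the Krylov subspace, show that it equals $U_{q+1}$ applied to a vector in $\R^{q+1}$, and then take norms. The computation parallels the proof of Theorem~\ref{thm1}, with $A$ replaced by $\bar{C}_q$ and $\bb$ replaced by $\be_1\|\bb\|_2$.

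\emph{Step 1: reduce the residual to small matrices.} From $\bx_{\ell,\mu}=V_q\by_{\ell,\mu}$, the left-hand decomposition in \eqref{lbd}, and $\bb=U_{q+1}\be_1\|\bb\|_2$ (since $\bu_1=\bb/\|\bb\|_2$), we get
\begin{equation*}
\bb-A\bx_{\ell,\mu}=U_{q+1}\left(\be_1\|\bb\|_2-\bar{C}_q\by_{\ell,\mu}\right).
\end{equation*}
Substituting \eqref{ymu} gives the operator $I-\sum_{j=1}^\ell \mu^{1-j}\bar{C}_q(\bar{C}_q^*\bar{C}_q+\mu^{-1}I)^{-j}\bar{C}_q^*$ acting on $\be_1\|\bb\|_2$.

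\emph{Step 2: simplify this operator.} Apply \eqref{eq1} with $A$ replaced by $\bar{C}_q$, repeatedly, to move $\bar{C}_q^*$ to the left of each power. This yields $\sum_j \mu^{1-j}M(M+\mu^{-1}I)^{-j}$ with $M=\bar{C}_q\bar{C}_q^*$. Writing $\mu^{1-j}(M+\mu^{-1}I)^{-j}=\mu(\mu M+I)^{-j}$ and setting $S=(\mu M+I)^{-1}$, we have $\mu M=S^{-1}-I$. The sum therefore telescopes:
\begin{equation*}
\sum_{j=1}^\ell (S^{-1}-I)S^j=\sum_{j=1}^\ell\left(S^{j-1}-S^j\right)=I-S^\ell.
\end{equation*}
Hence the operator equals $S^\ell=(\mu\bar{C}_q\bar{C}_q^*+I)^{-\ell}$, which is the expression \eqref{rellmu} up to sign. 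The telescoping identity should be verified directly rather than by citing \eqref{eq2} as printed, because that formula has a sign slip: the correct relation is $\mu M(\mu M+I)^{-1}=I-(\mu M+I)^{-1}$. This bookkeeping is the only delicate step, and a sign error there would not affect the norm anyway.

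\emph{Step 3: take norms.} Since $U_{q+1}$ has orthonormal columns and $S^\ell$ is symmetric,
\begin{equation*}
\|\bb-A\bx_{\ell,\mu}\|_2^2=\|\bb\|_2^2\,\be_1^*(\mu\bar{C}_q\bar{C}_q^*+I)^{-2\ell}\be_1=\bar{\phi}_{q+1,\ell}(\mu)
\end{equation*}
by \eqref{phibarell}. Taking square roots gives \eqref{discxell2}.

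The main obstacle is the operator simplification in Step~2, which rests on the small telescoping identity above.
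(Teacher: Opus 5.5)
Your proof is correct and follows the paper's route. The paper writes the residual as $U_{q+1}$ times a small vector via \eqref{lbd}, applies \eqref{eq1} and \eqref{eq2} with $A$ replaced by $\bar{C}_q$ to reach \eqref{rellmu}, and takes norms. Your explicit telescoping check is a useful addition, since \eqref{eq2} as printed does contain a sign error (it should read $I-(\mu AA^*+I)^{-1}$), and the leading minus sign in \eqref{rellmu} is likewise spurious but irrelevant for the norm.
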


\begin{proof}
We obtain from \eqref{rellmu} that
\begin{equation*}
\|\br_{\ell,\mu}\|_2^2=\|\bb\|_2^2 \be_1^*(\mu \bar{C}_q\bar{C}_q^{\ast}+I)^{-2\ell}\be_1.
\end{equation*}
Comparison with \eqref{phibarell} shows the theorem. $\Box$
\end{proof}

Theorems \ref{thm2b} and \ref{thm5} show that the Galerkin solution of the iterated
Tikhonov equations gives a residual vector that is of larger norm than if the computations
were carried out with the full matrix $A$. Moreover, Theorem \ref{thm2b} shows that a 
lower bound for the residual norm of the exact solution of the iterated Tikhonov method 
can be computed by evaluating $\phi_{q,\ell}^{1/2}(\mu)$. Increasing the number of
Golub-Kahan bidiagonalization steps leads to tighter bounds for the unknown residual of 
the problem with the full matrix $A$. This suggests the following method for iterated
Tikhonov regularization:

\begin{enumerate}
\item Determine an initial value of $\mu>0$, the number of Golub-Kahan 
bidiagonalization steps $q\geq 1$ and the number of iterations $\ell$ of our method.
\item Determine $\mu>0$ so that
$\bar{\phi}_{q+1,\ell}^{1/2}(\mu)=\eta\delta$ if possible. If this is not possible, then
increase $q$ by one. A zero-finder has to be used. There are many different possible choices of zero-finders such as a cubically convergent
zero-finder described in \cite{RS}. In the computed examples, we use a bisection method.
\item
Compute the lower bound $\phi_{q,\ell}^{1/2}(\mu)$. If 
$\bar{\phi}_{q+1,\ell}^{1/2}(\mu)-\phi_{q,\ell}^{1/2}(\mu)\leq \eta\delta/100$, then
accept the available Galerkin solution as an approximation of the solution of
\eqref{ittik}. If this inequality does not hold, then increase $q$ and compute a new
value of $\mu$ as described in Step 2.
\end{enumerate}

The factor $1/100$ in the bound of Step 3 is, of course, somewhat arbitrary and can be
changed.

We finally comment on the use of a regularization matrix $L\ne I$ in \eqref{ittik}.
Both Bj\"orck \cite{Bj1} and Eld\'en \cite{El} describe how Tikhonov minimization
problems \eqref{tikh} with $L\ne I$ can be transformed to an equivalent problem with
$L=I$. This transformation also can be used for the iterated Tikhonov method. The use of this transformation  is attractive
when $L$ has exploitable structure, such as being banded with small bandwidth. This
technique also can be applied when $L$ is an orthogonal projector; see \cite{MRS} for
illustrations of the latter.


\section{Computed examples}\label{sec5}
All computations were carried out using MATLAB with about $15$ significant decimal digits.
To ensure reproducibility, we set $\sf{seed}=11$ for the random number generator. The 
random number generator is used to determine the error vector $\be$ in the data, where $\be$ 
models white Gaussian noise. In the computed examples, we set $\eta=1$ in \eqref{discrprinc}, 
\eqref{disc2}, and onwards. While the theory that supports the discrepancy 
principle in infinite-dimensional Hilbert space prescribes that $\eta>1$, the method presented
performs well for $\eta=1$ for finite-dimensional problems with white Gaussian noise $\be$.
We remark that the inequality
\[
\bar{\phi}_{q+1,\ell}^{1/2}(\mu)-\phi_{q,\ell}^{1/2}(\mu)\leq \eta\delta/100
\]
holds for all of our examples.

We compare the results achieved with the method described in this paper with those
determined with an iterated Golub-Kahan-Tikhonov method described in \cite{bdfrGK}. 
The latter method is derived and convergence is shown in (infinite-dimensional) Hilbert space; the method is an extension of the approach by Neubauer \cite{Ne} to iterated Tikhonov regularization. The values of the regularization parameters of the method of the present paper and the method in \cite{bdfrGK} differ. In particular, we use the improved parameter choice strategy discussed in \cite[Proposition 7]{bdfrGK}, which we now recall: Consider the singular value decomposition 
\begin{equation*}
    \bar{C}_{q}=W_{q+1}\bar{\Sigma}_q S_{q}^{\ast},
\end{equation*}
where $W_{q+1}\in\R^{(q+1)\times(q+1)}$ and $S_q\in\R^{q\times q}$ are orthogonal matrices, and 
$\bar{\Sigma}_q\in\R^{(q+1)\times q}$ is a diagonal matrix whose nontrivial entries are the singular values of 
$\bar{C}_q$. Let 
$\tilde{\mu}$ be the unique solution of the equation
\begin{equation}\label{condgk2}
\bb_q^{\ast}(\mu\Sigma_{q}
\Sigma_{q}^{\ast}+I_{q+1})^{-2\ell-1}\bb_q=\delta^2,
\end{equation}
where
\begin{equation*}
    \bb_q=I_{q,q+1}W_{q+1}^{\ast}U_{q+1}^{\ast}\bb\qquad\text{and}\qquad I_{q,q+1}=\begin{bmatrix}
  I_{q} & 0\\
  0 & 0
  \end{bmatrix}\in\R^{(q+1)\times(q+1)}.
\end{equation*}

This choice of $\tilde{\mu}$ is in \cite{bdfrGK} shown to yield the smallest relative error for the parameter in the interval $[\tilde{\mu},+\infty)$ under the additional hypothesis that
\begin{equation*}
    \left(U_{q+1}W_{q+1}I_{q,q+1}W_{q+1}^{\ast}U_{q+1}^{\ast}A-AV_qV_q^{\ast}\right)\widehat{\bx}=0.
\end{equation*}
In the following examples this condition is not satisfied. Nonetheless, we can apply this choice of the regularization parameter and compare it with the choice of the regularization parameter proposed in the present paper. 

We also compare the method of the present paper to the {\sf lsqr} method 
provided by MATLAB. The latter is an iterative method that determines a partial
Golub--Kahan bidiagonalization of $A$ with initial vector $\bb$; see \cite{PS}. 
Regularization is 
achieved by truncated iteration, i.e., the iterations are terminated as soon as an 
iterate has been determined that is associated with a residual error of norm at most 
$\eta\delta$; see \cite{Ha2,RSS} for details of this regularizaation method.

\vspace{0.5cm}


Example 5.1. Consider the Fredholm integral equation of the first kind,
\begin{equation}\label{shaw}
\int_{-\pi/2}^{\pi/2} \kappa(\sigma,\tau)x(\sigma)d\sigma=b(\tau),\qquad
-\frac{\pi}{2}\le \tau\le \frac{\pi}{2},
\end{equation}
where 
\begin{equation*}
   \kappa(\sigma,\tau):=(\cos(\sigma)+\cos(\tau))^2\left(\frac{\sin(\xi)}{\xi}\right)^2,
\qquad \xi:=\pi(\sin(\sigma)+\sin(\tau)),
\end{equation*}
and the right-hand side $b(\tau)$ is chosen so that the solution $x(\sigma)$ 
is a sum of two Gaussian functions. This integral equation is discussed by 
Shaw \cite{Sh}. We use the code {\sf shaw} from the MATLAB package 
Regularization Tools by Hansen \cite{Ha1} to discretize (\ref{shaw}) by a 
quadrature rule with $100$ nodes. This yields the matrix 
$A\in{\R}^{100\times 100}$ and right-hand side 
$\widehat{\bb}\in {\R}^{100}$. Let the entries of the error vector 
${\be}\in{\R}^{100}$ be normally distributed with zero mean and be normalized so that 
\begin{equation}\label{noiselevel}
\frac{\|{\be}\|}{\|\widehat{\bb}\|}=1\cdot 10^{-3}.
\end{equation}

The program {\sf shaw} determines a vector that, after suitable scaling, solves the error-free system (\ref{linsysexact}). We denote the scaled vector 
by $\widehat{\bx}$. Some computed results are reported in Table~\ref{tabshaw}.

\begin{table}[tbh]
\centering
\caption{Example 5.1 - Parameter estimation and relative errors.}\label{tabshaw}
\begin{tabular}{ccccc}
 \toprule%
 & & \multicolumn{3}{c}{Parameter choice proposed in this paper}\\
 \cmidrule{3-5}
 $q$ & $\ell$ & $\mu$ & $\bar{\phi}_{q+1,\ell}^{1/2}(\mu)-\phi_{q,\ell}^{1/2}(\mu)$ & $\|\widehat{\bx}-\bx_{\ell,\mu}\|_2/
 \|\widehat{\bx}\|_2$\\
 \midrule
 \multirow{5}*{8}  & 1 & $5.58\cdot 10^{3\phantom{-}}$ & $1.26\cdot 10^{-6}$ & $5.25\cdot 10^{-2}$\\
 & 10 & $2.35\cdot 10^{2\phantom{-}}$ & $2.29\cdot 10^{-8}$ & $5.19\cdot 10^{-2}$\\
 & 100 & $2.19\cdot 10^{1\phantom{-}}$ & $1.19\cdot 10^{-8}$ & $5.18\cdot 10^{-2}$\\
 & 1000 & $2.18\cdot 10^{0\phantom{-}}$ & $1.11\cdot 10^{-8}$ & $5.18\cdot 10^{-2}$\\
 & 10000 & $2.17\cdot 10^{-1}$ & $1.10\cdot 10^{-8}$ & $5.18\cdot 10^{-2}$\\
 \bottomrule
\end{tabular}
\end{table}

We report results for the parameter choice strategy \eqref{condgk2} in Table~\ref{tabshaw2} and compare them to results obtained with the parameter choice proposed in the present paper. The latter choice gives improved approximate solutions already for modest $\ell$-values, while the parameter choice rule \eqref{condgk2} requires very many iterations to be competitive with respect to quality.

\begin{table}[tbh]
\centering
\caption{Example 5.1 - Parameter estimation and relative errors.}\label{tabshaw2}
\begin{tabular}{cccc}
 \toprule%
 & & \multicolumn{2}{c}{Parameter choice \eqref{condgk2}}\\ 
 \cmidrule{3-4}
 $q$ & $\ell$ & $\tilde{\mu}$ & $\|\widehat{\bx}-\bx_{\ell,\tilde{\mu}}\|_2/
 \|\widehat{\bx}\|_2$\\ 
 \midrule
 \multirow{5}*{8}  & 1 & $6.51\cdot 10^{2\phantom{-}}$ & $1.05\cdot 10^{-1}$\\ 
 & 10 & $7.10\cdot 10^{1\phantom{-}}$ & $9.11\cdot 10^{-2}$\\ 
 & 100 & $2.79\cdot 10^{0\phantom{-}}$ & $8.95\cdot 10^{-2}$\\ 
 & 1000 & $7.16\cdot 10^{-1}$ & $8.94\cdot 10^{-2}$\\ 
 & 10000 & $9.65\cdot 10^{-1}$ & $5.05\cdot 10^{-2}$\\ 
 \bottomrule
\end{tabular}
\end{table}
The {\sf lsqr} method with tolerance of $1\cdot 10^{-3}$ terminates after $8$ 
iterations and gives a relative error of $5.19\cdot 10^{-2}$. This is very similar to 
results reported in Table \ref{tabshaw}.
Figure~\ref{figrecshaw} displays a reconstruction determined by our method and Figure \ref{figphishaw} (in logarithmic scale) illustrates Theorem~\ref{thm2b}. $~~~~\Box$

\begin{figure*}[tbh]
\centering
\includegraphics[scale=0.35]{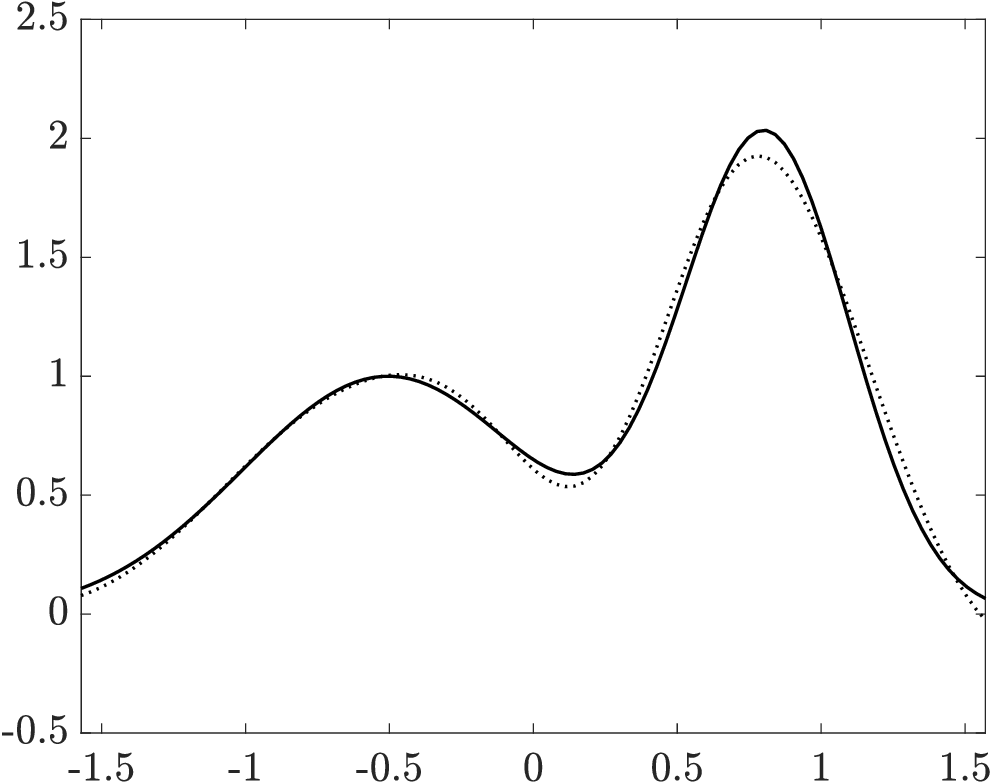} 
\caption{Example 5.1: Solution $\widehat{\bx}$ of the error-free linear system (\ref{linsysexact}) (continuous curve) and computed approximate solution ${\bx}_{\ell,\mu}$ (dotted curve) by the method of the present paper for $q=8$ and $\ell=10$.}\label{figrecshaw}
\end{figure*}

\begin{figure*}[tbh]
\centering
\includegraphics[scale=0.35]{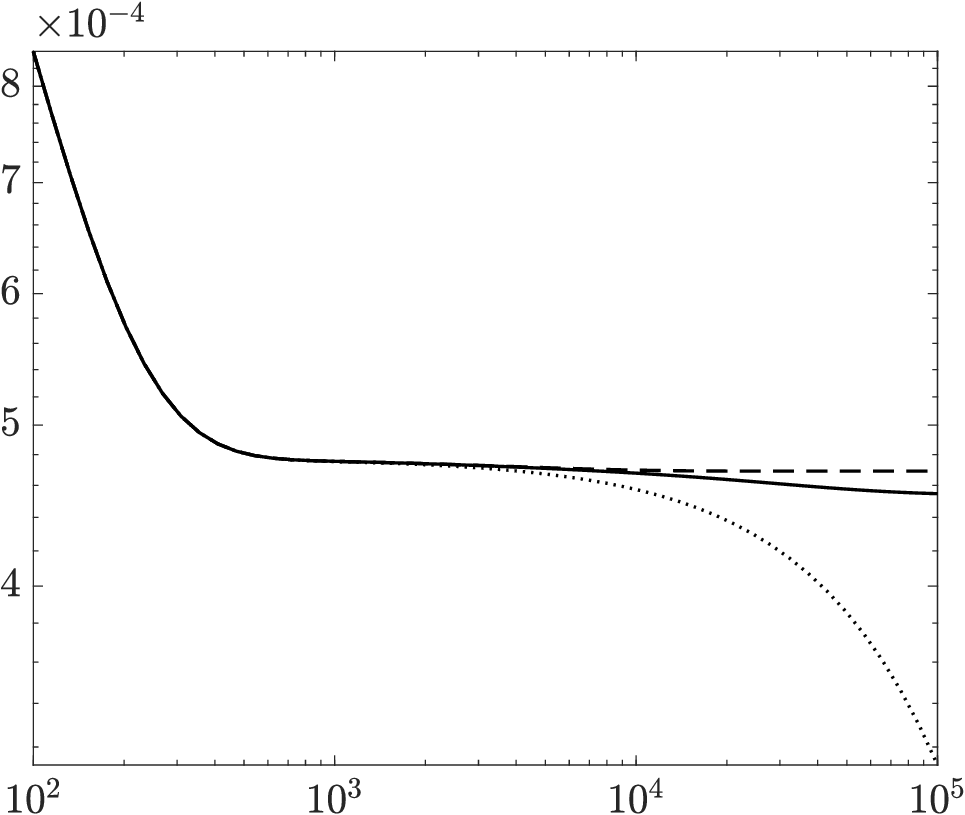}
\caption{Example 5.1 - The function $\phi_{\ell}(\mu)$ and its approximations for $10^2\leq\mu\leq 10^5$. From top to bottom at the right-hand side of the figures: $\bar{\phi}_{q+1,\ell}$ (dashed curve), $\phi_{\ell}$ (continuous curve), $\phi_{q,\ell}$ (dotted curves) for $q=8$ and $\ell=10$.}\label{figphishaw}
\end{figure*}

\vspace{0.5cm}
Example 5.2. The Fredholm integral equation of the first kind,
\begin{equation}\label{baart}
\int_0^{\pi/2} \kappa(\sigma,\tau)x(\sigma)d\sigma=b(\tau),\qquad
0\le \tau\le \pi,
\end{equation}
with $\kappa(\sigma,\tau):=\exp(\sigma\cos(\tau))$, 
$b(\tau):= 2\sinh(\tau)/\tau$, and solution $x(\tau) = \sin(\tau)$ is discussed 
by Baart \cite{Ba}. We use the MATLAB code {\sf baart} from \cite{Ha1} to 
discretize (\ref{baart}) by a Galerkin method with $100$ orthonormal box 
functions as test and trial functions. This yields the matrix 
$A\in{\R}^{100\times 100}$ and the right-hand side vector
$\widehat{\bb}\in {\R}^{100}$. An error vector ${\be}\in{\R}^{100}$ is generated
analogously as in Example 5.1. In particular, 
$\|{\be}\|_2=1\cdot 10^{-3}\|\widehat{\bb}\|_2$. The matrix $A$ and the contaminated
right-hand side ${\bb}$ of (\ref{linsys}) are formed as in Example 5.1; 
they satisfy $\|A^T{\bb}\|_2=2$ and $\|{\bb}\|_2=1$. The matrix $A$ is nonsymmetric 
and has many singular values close to the origin; its condition number is computed to be
$\kappa(A)=1.3\cdot 10^{18}$ with the MATLAB function {\sf cond}. 

Some computed results are displayed in Table~\ref{tabbaart}. We compare the parameter selection strategy of the current paper with the parameter selection strategy using equation \eqref{condgk2}. We can see that the method of this paper determines computed approximations of $\widehat{\bx}$ of higher quality than the method \eqref{condgk2}.

\begin{table}
\centering
\caption{Example 5.2 - Parameter selection and relative errors.}\label{tabbaart}
\begin{tabular}{cccccccc}
 \toprule%
 & & \multicolumn{3}{c}{Method of this paper} & & \multicolumn{2}{c}{Method using \eqref{condgk2}}\\
 \cmidrule{3-5}
 \cmidrule{7-8}
 $q$ & $\ell$ & $\mu$ & $\bar{\phi}_{q+1,\ell}^{1/2}(\mu)-\phi_{q,\ell}^{1/2}(\mu)$ & $\|\widehat{\bx}-\bx_{\ell,\mu}\|_2/
 \|\widehat{\bx}\|_2$ & & $\tilde{\mu}$ & $\|\widehat{\bx}-\bx_{\ell,\tilde{\mu}}\|_2/
 \|\widehat{\bx}\|_2$\\
 \midrule
 \multirow{5}*{5}  & 1 & $4.76\cdot 10^{4}$ & $8.37\cdot 10^{-10}$ & $1.22\cdot 10^{-1}$ & & $7.58\cdot 10^{2\phantom{-}}$ & $1.75\cdot 10^{-1}$\\
 & 10 & $3.26\cdot 10^{3}$ & $9.37\cdot 10^{-11}$ & $1.20\cdot 10^{-1}$ & & $4.97\cdot 10^{1\phantom{-}}$ & $1.68\cdot 10^{-1}$\\
 & 100 & $3.16\cdot 10^{2}$ & $7.03\cdot 10^{-11}$ & $1.20\cdot 10^{-1}$ & & $4.68\cdot 10^{0\phantom{-}}$ & $1.67\cdot 10^{-1}$\\
 & 1000 & $3.15\cdot 10^{1}$ & $6.82\cdot 10^{-11}$ & $1.20\cdot 10^{-1}$ & & $7.02\cdot 10^{-1}$ & $1.64\cdot 10^{-1}$\\
 & 10000 & $3.15\cdot 10^{0}$ & $6.80\cdot 10^{-11}$ & $1.20\cdot 10^{-1}$ & & $9.65\cdot 10^{-1}$ & $1.41\cdot 10^{-1}$\\
 \bottomrule
\end{tabular}
\end{table}

The {\sf lsqr} method with tolerance of $10^{-3}$ terminates after $3$ 
iterations and gives an approximate solution with relative error of $1.66\cdot 10^{-1}$.
This error is about 30\% larger than the error in the approximate solution 
determined by the method of this paper.
Figure~\ref{figrecbaart} displays a reconstruction determined by our method and Figure~\ref{figphibaart} (logarithmic scale) illustrates Theorem~\ref{thm2b}. $~~~\Box$

\begin{figure*}[tbh]
\centering
\includegraphics[scale=0.35]{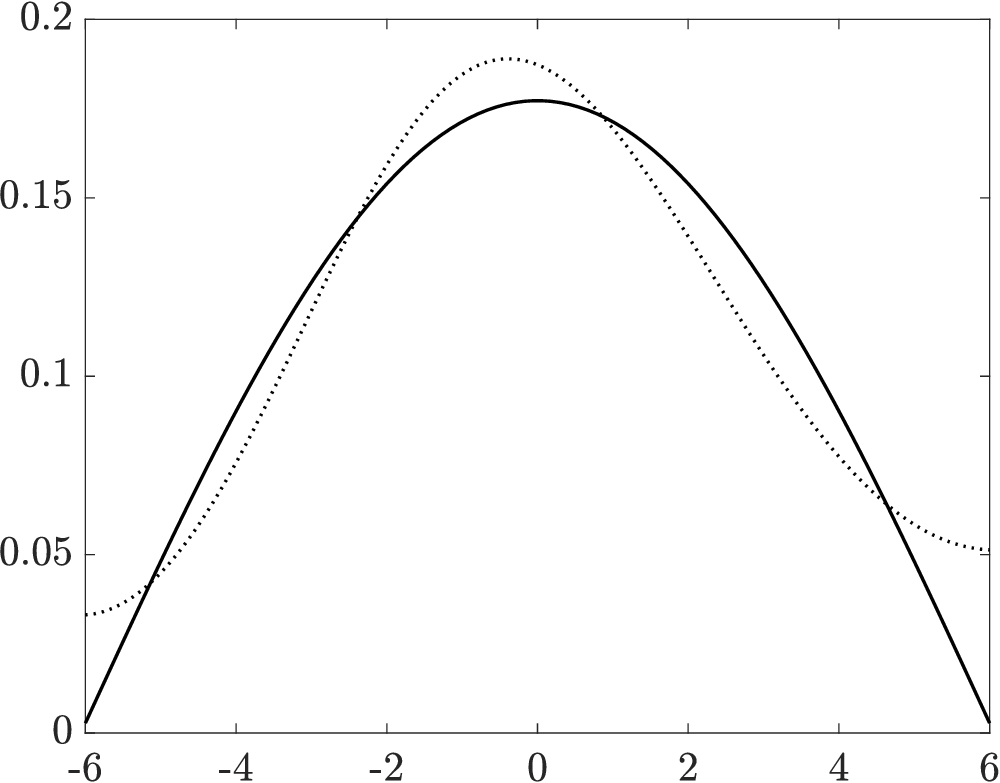} 
\caption{Example 5.2 - Solution $\widehat{\bx}$ of the error-free linear system (\ref{linsysexact}) (continuous curve) and computed approximate solution ${\bx}_{\ell,\mu}$ (dotted curve) by the method of the present paper for $q=5$ and
$\ell=10$.}\label{figrecbaart}
\end{figure*}

\begin{figure*}[tbh]
\centering
\includegraphics[scale=0.35]{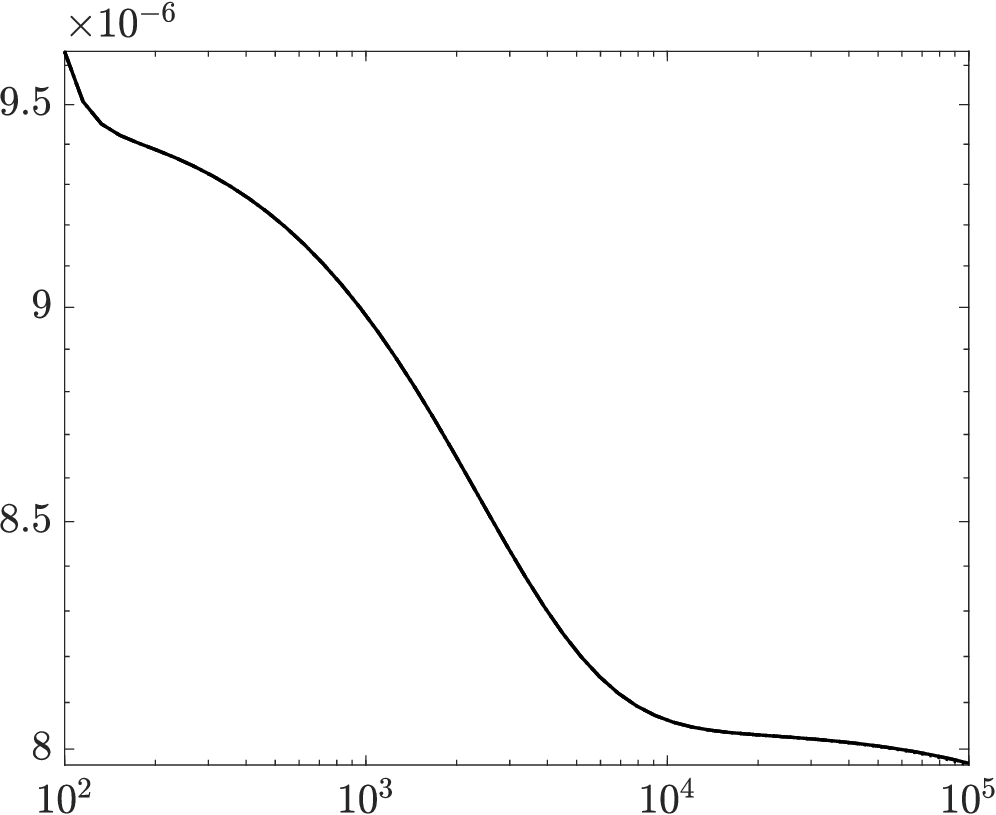}
\caption{Example 5.2 - The function $\phi_{\ell}(\mu)$ and its approximations 
for $10^2\leq\mu\leq 10^5$. From top to bottom at the right-hand side of
the figure: $\bar{\phi}_{q+1,\ell}$ (dashed curve), $\phi_{\ell}$ (continuous curve),
$\phi_{q,\ell}$ (dotted curve) for $q=5$ and $\ell=10$.}\label{figphibaart}
\end{figure*}

\vspace{0.5cm}

Example 5.3. We consider a digital image deblurring problem. We use the function 
\texttt{PRblurshake} from the toolbox \texttt{IRtools} \cite{GHN} with default options and by setting \texttt{seed}=11 and \texttt{generator}=\texttt{`twister'} in the MATLAB function \texttt{rng()} to determine the matrix $A\in{\R}^{512^2\times 512^2}$ that models blurring of an image that is represented by $512\times 512$ pixels and the right-hand side vector
$\widehat{\bb}\in {\R}^{512^2}$. The blur simulates random camera motion (shaking). The true image is represented by the vector $\widehat{\bx}\in {\R}^{512^2}$. An error vector ${\be}\in{\R}^{512^2}$ is generated analogously as in Example 5.1. In particular, $\|{\be}\|_2=4\cdot 10^{-2}\|\widehat{\bb}\|_2$. Some results can be found in Table~\ref{tabblur}.

\begin{table}
\centering
\caption{Example 5.3 - Parameter estimation and relative error.}\label{tabblur}
\begin{tabular}{ccccc}
 \toprule%
 & & \multicolumn{3}{c}{Method of this paper}\\
 \cmidrule{3-5}
 $q$ & $\ell$ & $\mu$ & $\bar{\phi}_{q+1,\ell}^{1/2}(\mu)-\phi_{q,\ell}^{1/2}(\mu)$ & $\|\widehat{\bx}-\bx_{\ell,\mu}\|_2/
 \|\widehat{\bx}\|_2$\\
 \midrule
 \multirow{6}*{50}  & 1 & $1.03\cdot 10^{2\phantom{-}}$ & $7.19\cdot 10^{-7}$ & $2.07\cdot 10^{-1}$\\
 & 5 & $1.04\cdot 10^{1\phantom{-}}$ & $\leq\epsilon$ & $2.03\cdot 10^{-1}$\\
 & 10 & $4.93\cdot 10^{0\phantom{-}}$ & $\leq\epsilon$ & $2.03\cdot 10^{-1}$\\
 & 20 & $2.41\cdot 10^{0\phantom{-}}$ & $\leq\epsilon$ & $2.02\cdot 10^{-1}$\\
 & 50 & $9.50\cdot 10^{-1}$ & $\leq\epsilon$ & $2.02\cdot 10^{-1}$\\
 & 100 & $4.73\cdot 10^{-1}$ & $\leq\epsilon$ & $2.02\cdot 10^{-1}$\\
 \bottomrule
\end{tabular}
\end{table}

Tables~\ref{tabblur} and \ref{tabblur2} compare our approach to determine the regularization parameter with the use of equation \eqref{condgk2}. The entries ``$\leq\epsilon$'' in Table \ref{tabblur} denote that the difference is smaller than machine epsilon. The method described in this paper gives approximations of $\widehat{\bx}$ of higher quality than the method
\eqref{condgk2} already after the first few iterations.
Figure~\ref{figblur} shows a reconstruction determined by our method.

\begin{table}
\centering
\caption{Example 5.3 - Parameter estimation and relative error.}\label{tabblur2}
\begin{tabular}{cccc}
 \toprule%
 & & \multicolumn{2}{c}{Method using \eqref{condgk2}}\\
 \cmidrule{3-4}
 $q$ & $\ell$ & $\tilde{\mu}$ & $\|\widehat{\bx}-\bx_{\ell,\tilde{\mu}}\|_2/
 \|\widehat{\bx}\|_2$\\
 \midrule
 \multirow{5}*{50}  & 1 & $4.05\cdot 10^{1\phantom{-}}$ & $2.37\cdot 10^{-1}$\\
 & 5 & $7.90\cdot 10^{0\phantom{-}}$ & $2.15\cdot 10^{-1}$\\
 & 10 & $3.95\cdot 10^{0\phantom{-}}$ & $2.12\cdot 10^{-1}$\\
 & 50 & $7.90\cdot 10^{0\phantom{-}}$ & $2.10\cdot 10^{-1}$\\
 & 100 & $3.95\cdot 10^{-1}$ & $2.10\cdot 10^{-1}$\\
 \bottomrule
\end{tabular}
\end{table}

The {\sf lsqr} method with tolerance $4\cdot 10^{-2}\|\widehat{\bb}\|$ terminates after 
$10$ iterations and gives a relative error of $1.95\cdot 10^{-1}$. This error is of
about the same size as the errors reported in Table \ref{tabblur}.
Figure~\ref{figphiblur} (logarithmic scale) illustrates Theorem~\ref{thm2b}. $~~~\Box$ 

\begin{figure*}[tbh]
\centering
\includegraphics[scale=0.25]{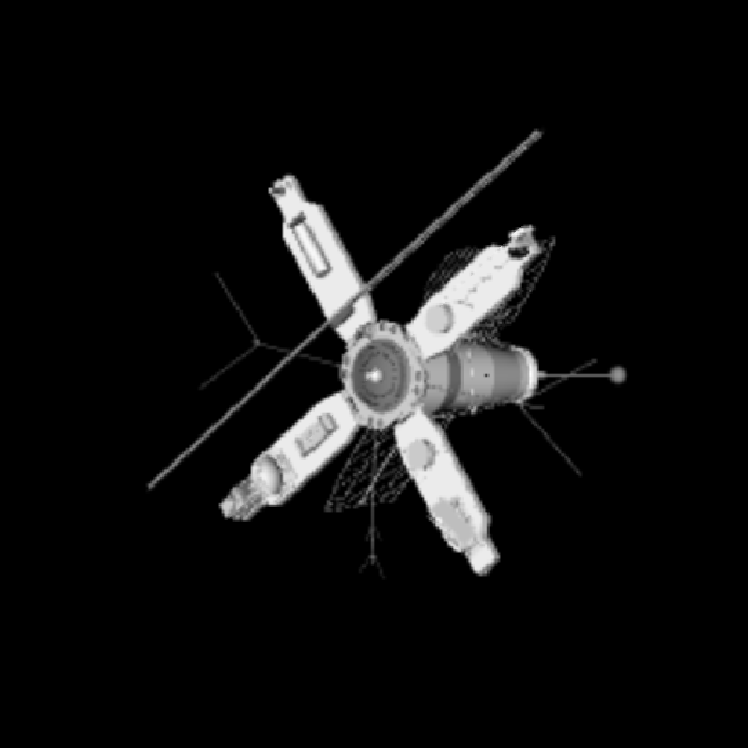}
\qquad
\includegraphics[scale=0.25]{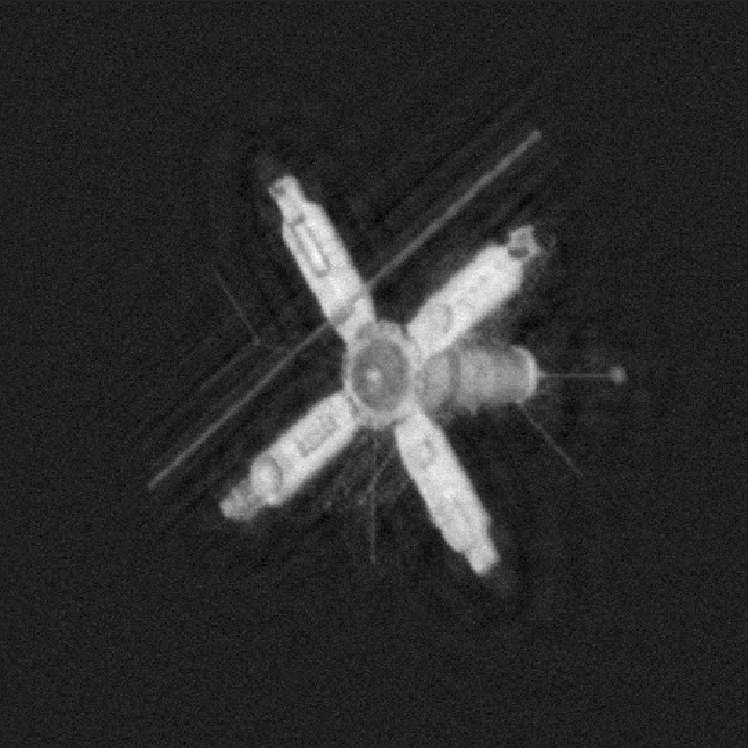}
\qquad
\includegraphics[scale=0.25]{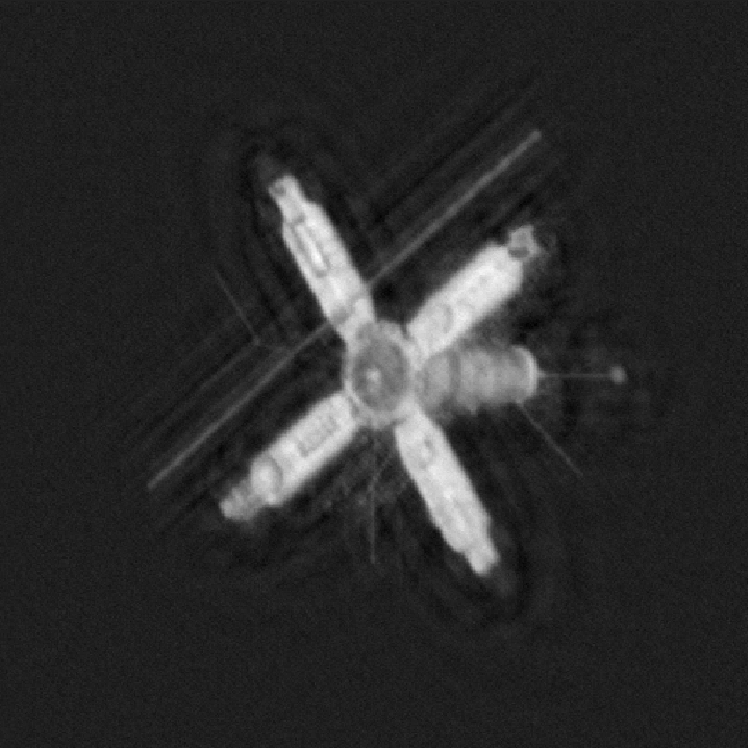} 
\caption{Example 5.3 - Solution $\widehat{\bx}$ (left) of the error-free linear
system (\ref{linsysexact}) and computed approximate solution
${\bx}_{\ell,\mu}$ computed by the method of the present paper for $\ell=1$ (center) and $\ell=20$ (right) for $q=50$.}\label{figblur}
\end{figure*}

\begin{figure*}[tbh]
\centering
\includegraphics[scale=0.35]{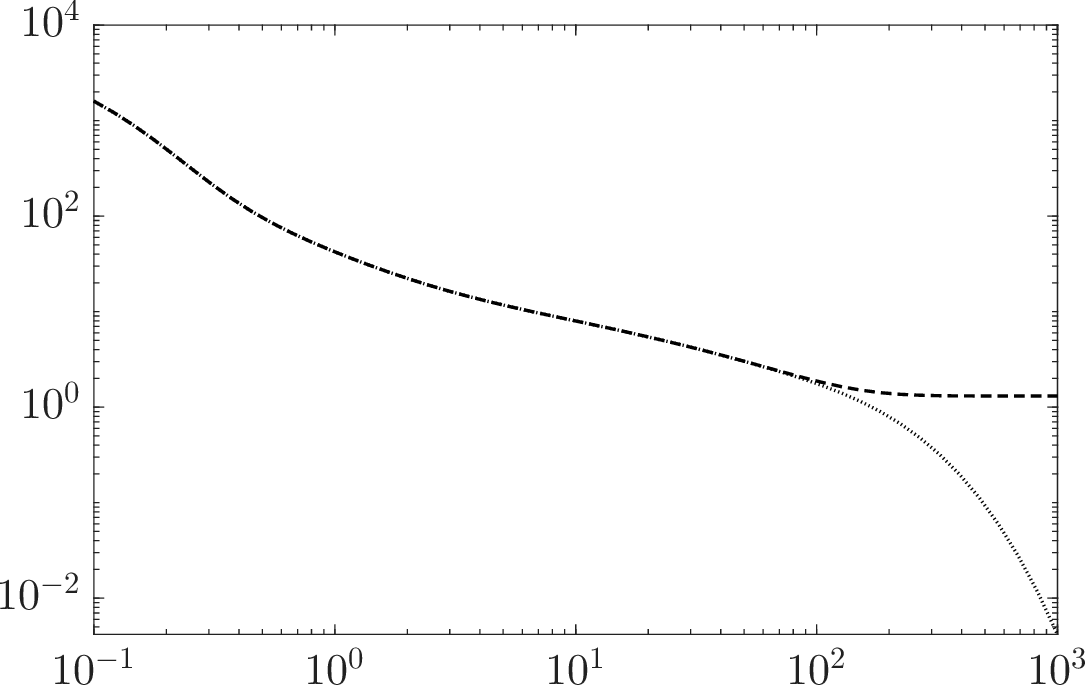}
\caption{Example 5.3 - Approximations of the function $\phi_{\ell}(\mu)$
for $10^2\leq\mu\leq 10^5$. From top to bottom at the right-hand side of the figure: $\bar{\phi}_{q+1,\ell}$ (dashed curve),
$\phi_{q,\ell}$ (dotted curve) for $q=50$ and $\ell=10$.}\label{figphiblur}
\end{figure*}


\section{Conclusion}\label{sec6}
This paper presents a novel approach to choosing the regularization parameter in iterated Tikhonov regularization.
Computed examples show the proposed method to give more
accurate approximate solutions than a recently proposed method discussed in \cite{bdfrGK}. Though, we note that the derivations and analyses of these methods differ; the method discussed in
\cite{bdfrGK} lives in Hilbert space and the choice of the regularization parameter takes the discretization error into account. The method proposed in the present paper is designed to solve linear discrete ill-posed problems and the discretization error is ignored when determining the regularization parameter. 
The method of this paper also is compared lsqr with regularization by
truncated iteration. Our method is found to give the most accurate or close to the
most accurate approximations of the desired solution $\widehat{\bx}$ in all examples.

\printcredits

\bibliographystyle{cas-model2-names}

\bibliography{cas-refs}

\end{document}